\theoremstyle{thmstyleone}%
\newtheorem{theorem}{Theorem}
\newtheorem{proposition}[theorem]{Proposition}%
\theoremstyle{thmstyletwo}%
\newtheorem{example}{Example}%
\newtheorem{remark}{Remark}%
\theoremstyle{thmstylethree}%
\newtheorem{definition}{Definition}%
\begin{document}

\title[Interval Extropy and Weighted Interval Extropy]{Interval Extropy and Weighted Interval Extropy}


\author*[1]{\fnm{Francesco} \sur{Buono}}\email{francesco.buono3@unina.it}
\equalcont{These authors contributed equally to this work.}

\author[2]{\fnm{Osman} \sur{Kamari}}\email{osman.kamari@uhd.edu.iq}
\equalcont{These authors contributed equally to this work.}

\author[3]{\fnm{Maria} \sur{Longobardi}}\email{maria.longobardi@unina.it}
\equalcont{These authors contributed equally to this work.}

\affil[1]{\orgdiv{Dipartimento di Matematica e Applicazioni ``Renato Caccioppoli"}, \orgname{Università degli Studi di Napoli Federico II}, \orgaddress{\city{Naples}, \state{Italy}}}

\affil[2]{\orgdiv{Department of Business Management}, \orgname{University of Human Development}, \orgaddress{ \city{Sulaymaniyah},  \state{Iraq}}}

\affil[3]{\orgdiv{Dipartimento di Biologia}, \orgname{Università degli Studi di Napoli Federico II}, \orgaddress{\city{Naples}, \state{Italy}}}

\abstract{Recently, Extropy was introduced by Lad, Sanfilippo and Agrò as a complement dual of Shannon Entropy. In this paper, we propose dynamic versions of Extropy for doubly truncated random variables as measures of uncertainty called Interval Extropy and Weighted Interval Extropy. Some characterizations of random variables related to these new measures are given. Several examples are shown. These measures are evaluated under the effect of linear transformations and, finally, some bounds for them are presented.}

\keywords{Uncertainty, Extropy, Weighted Extropy, Characterization}


\pacs[MSC Classification]{62N05, 94A17 }

\maketitle

\section{Introduction}
The concept of Shannon entropy as a basic measure of uncertainty for a random variable was introduced by Shannon \cite{Shannon}. Let $X$ be an absolutely continuous non-negative random variable having probability density function (pdf) $f$ and cumulative distribution function (cdf) $F$. In Reliability Theory, $X$ represents the lifetime of a system or a living organism. Shannon entropy for this kind of random variables is named differential entropy and is defined as follows: 
\begin{eqnarray}
\nonumber
H(X)=-\int_{0}^{+\infty}{f(x)}\log f(x)\,dx,
\end{eqnarray}
where $\log$ denotes the natural logarithm.
Recently, another measure of uncertainty, known as extropy, was proposed by Lad et al. \cite{Lad} as a dual measure of Shannon entropy. For a non-negative random variable $X$ the extropy is defined as below:
\begin{eqnarray}
J(X)=-\frac{1}{2} \int_{0}^{+\infty}{f^2(x)}\,dx. \label{extropy}
\end{eqnarray}
The concept of extropy is useful in many fields: for instance, it is applied in automatic speech recognition \cite{becerra}. In particular, the extropy of a network output with respect to the training set can be obtained in order to compute a kind of transformed cross entropy. Moreover, extropy is a measure better than entropy in some scenarios in statistical mechanics and thermodynamics \cite{martinas}. More recently, some applications of extropy have been done in pattern recognition \cite{balakrishnan2,kazemi}.

Qiu et al. \cite{Qiu} defined the extropy for residual lifetime $X_t=(X-t \mid X>t)$ whose pdf is $f_{X_t}(x)=\frac{f(x+t)}{\overline F(t)}$ and survival function $\overline F_{X_t}(x)=\frac{\overline F(x+t)}{\overline F(t)}$, $x>0$, called the residual extropy (REx) at time t and defined as
\begin{eqnarray}
\nonumber
J(X_t)=-\frac{1}{2(\overline{F}(t))^2} \int_{t}^{+\infty}{f^2(x)}\,dx,
\end{eqnarray}
where $ \overline{F}(t)=\mathbb P(X>t)=1-F(t) $ is the survival (reliability) function of $X$.

Krishnan et al. \cite{Krishnan} and Kamari and Buono \cite{Kamari} studied the dual measure of residual extropy for past lifetime $_tX=(X \mid X\leq t)$, whose pdf is $f_{_tX}(x)=\frac{f(x)}{F(t)}$ and cumulative distribution function $F_{_t{}X}(x)=\frac{F(x)}{F(t)}$, $0<x<t$, called past extropy (PEx) and defined as follows:
\begin{eqnarray}
\nonumber
J(_tX)=-\frac{1}{2({F}(t))^2} \int_{0}^{t}{f^2(x)}\,dx.
\end{eqnarray}
Recently, there has been growing attention to study uncertainty measures for doubly truncated random variable which is widely applied in many fields such as survival analysis and reliability engineering. In survival analysis, if the lifetime of the item falls in an interval $(t_1,t_2)$, information about lifetime between these two points (also named doubly truncated failure time) is studied, see, for instance, Betensky and Martin \cite{Betensky}, Khorashadizadeh et al. \cite{Khorashadizadeh} and Poursaeed and Nematollahi \cite{Poursaeed}. Then, the random variable $(X\mid t_1<X<t_2)$ is introduced with pdf $f_{X_{t_1,t_2}}(x)=\frac{f(x)}{F(t_2)-F(t_1)}$ and cdf $F_{X_{t_1,t_2}}(x)=\frac{F(x)-F(t_1)}{F(t_2)-F(t_1)}$, $t_1<x<t_2$. With this motivation, Sunoj et.al. \cite{Sunoj} introduced Interval Entropy to measure uncertainty in truncated random variable $(X\mid {t_1<X<t_2})$ that is defined as follows:
\begin{eqnarray}
H(t_1,t_2)=-\int_{t_1}^{t_2} \frac{f(x)}{{F}(t_2)-{F}(t_1)} \log \frac{f(x)}{{F}(t_2)-{F}(t_1)}    \,dx, \label{Interval entropy}
\end{eqnarray}
which is an extension of Shannon Entropy. If $t_2\rightarrow+\infty$, then $H(t_1,t_2)$ tends to residual entropy which was introduced by Ebrahimi \cite{Ebrahimi}. Moreover, if $t_1\rightarrow0$, then $H(t_1,t_2)$ tends to the past entropy defined by Di Crescenzo and Longobardi \cite{Di Crescenzo}. Several other properties of the interval entropy were studied by Misagh and Yari \cite{Misagh 2012}.

Di Crescenzo and Longobardi \cite{Di Crescenzo2006} defined weighted entropy, weighted residual entropy and weighted past entropy, which are respectively given by
\begin{eqnarray}
\nonumber
H^w(X)&=&-\int_{0}^{+\infty}{xf(x)}\log f(x)\,dx, \\
\nonumber
H^w(X_t)&=&-\int_{0}^{+\infty}{x\frac{f(x)}{\overline{F}(t)}}\log \frac{f(x)}{\overline{F}(t)}\,dx, \\
\nonumber
H^w(_tX)&=&-\int_{0}^{+\infty}{x\frac{f(x)}{{F}(t)}}\log \frac{f(x)}{{F}(t)}\,dx.
\end{eqnarray}
Balakrishnan et al. \cite{Balakrishnan} introduced weighted Extropy and its dynamic versions as Weighted Residual Extropy and Weighted Past Extropy for residual and past lifetime as below:
\begin{eqnarray}
\nonumber
J^w(X)&=&-\frac{1}{2} \int_{0}^{+\infty}{x f^2(x)}\,dx, \\
\nonumber
J^w(X_t)&=&-\frac{1}{2(\overline{F}(t))^2} \int_{t}^{+\infty}{x f^2(x)}\,dx, \\
\nonumber
J^w(_{t}X)&=&-\frac{1}{2({F}(t))^2} \int_{0}^{t}{x f^2(x)}\,dx.
\end{eqnarray}
Weighted Interval Entropy was introduced by Misagh and Yari \cite{Misagh 2011} for doubly truncated random variable $(X\mid {t_1<X<t_2})$ as follows:

\begin{eqnarray}
\nonumber
IH^w(t_1,t_2)=-\int_{t_1}^{t_2}{x\frac{f(x)}{{F}(t_2)-{F}(t_1)}} \log \frac{f(x)}{{F}(t_2)-{F}(t_1)}\,dx.
\end{eqnarray}
In analogy with the novel measures of uncertainty (i.e., Interval Entropy and Weighted Interval Entropy), here we introduce the concepts of Interval Extropy and Weighted Interval Extropy for doubly truncated random variables. 

\section{Interval Extropy}
Let us suppose that the random variable $(X\mid {t_1<X<t_2})$ represents the lifetime of a unit which fails between $t_1$ and $t_2$ where  ${(t_1,t_2)}\in D=\{(u,v)\in R_+^2: F(u)<F(v)\}$, the Extropy for the doubly truncated random variable is defined as follows:
\begin{eqnarray}
IJ(t_1,t_2)=IJ(X\mid {t_1<X<t_2})=-\frac{1}{2({F}(t_2)-{F}(t_1))^2} \int_{t_1}^{t_2}{f^2(x)}\,dx, \label{Interval extropy}
\end{eqnarray}
which is an extension of Extropy and is called Interval Extropy (IEx). In (\ref{Interval extropy}) we have omitted the dependence of $X$ in the expression $IJ(t_1,t_2)$, but when it is necessary we denote by $IJ_X(t_1,t_2)$ the interval extropy of $X$ to distinguish it from the interval extropy of another random variable.
\begin{remark}\label{Rmk1} It is clear that $IJ(0,t_2)=J(_{t_2}X)$, $IJ(t_1,+\infty)=J(X_{t_1})$ and $IJ(0,$ $ +\infty)=J(X)$ are Past Extropy, Residual Extropy and Extropy, respectively.
\end{remark}

\begin{example}
\label{ex1}
Let $X\sim Exp(\lambda)$, $\lambda>0$ and support $(0,+\infty)$. Based on (\ref{Interval extropy}), we evaluate the interval extropy of $X$ for $0<t_1<t_2<+\infty$ and we obtain
\begin{eqnarray}
\nonumber
IJ(t_1,t_2)&=&\frac{-1}{2(e^{-\lambda t_1}-e^{-\lambda t_2})^2}\int_{t_1}^{t_2} \lambda^2 e^{-2\lambda x} dx \\
\nonumber
&=& -\frac{\lambda}{4} \cdot \frac{e^{-\lambda t_2}+e^{-\lambda t_1}}{e^{-\lambda t_1}-e^{-\lambda t_2}}.
\end{eqnarray}
In Figure \ref{fig-exp}, we plot the interval extropy as a function of $t_1$ for fixed $t_2$ (Figure \ref{fig-exp}(a)) and vice versa (Figure \ref{fig-exp}(b)) for $\lambda=1$.
\begin{figure}
     \centering
     \begin{subfigure}[b]{0.48\textwidth}
         \centering
         \includegraphics[width=\textwidth]{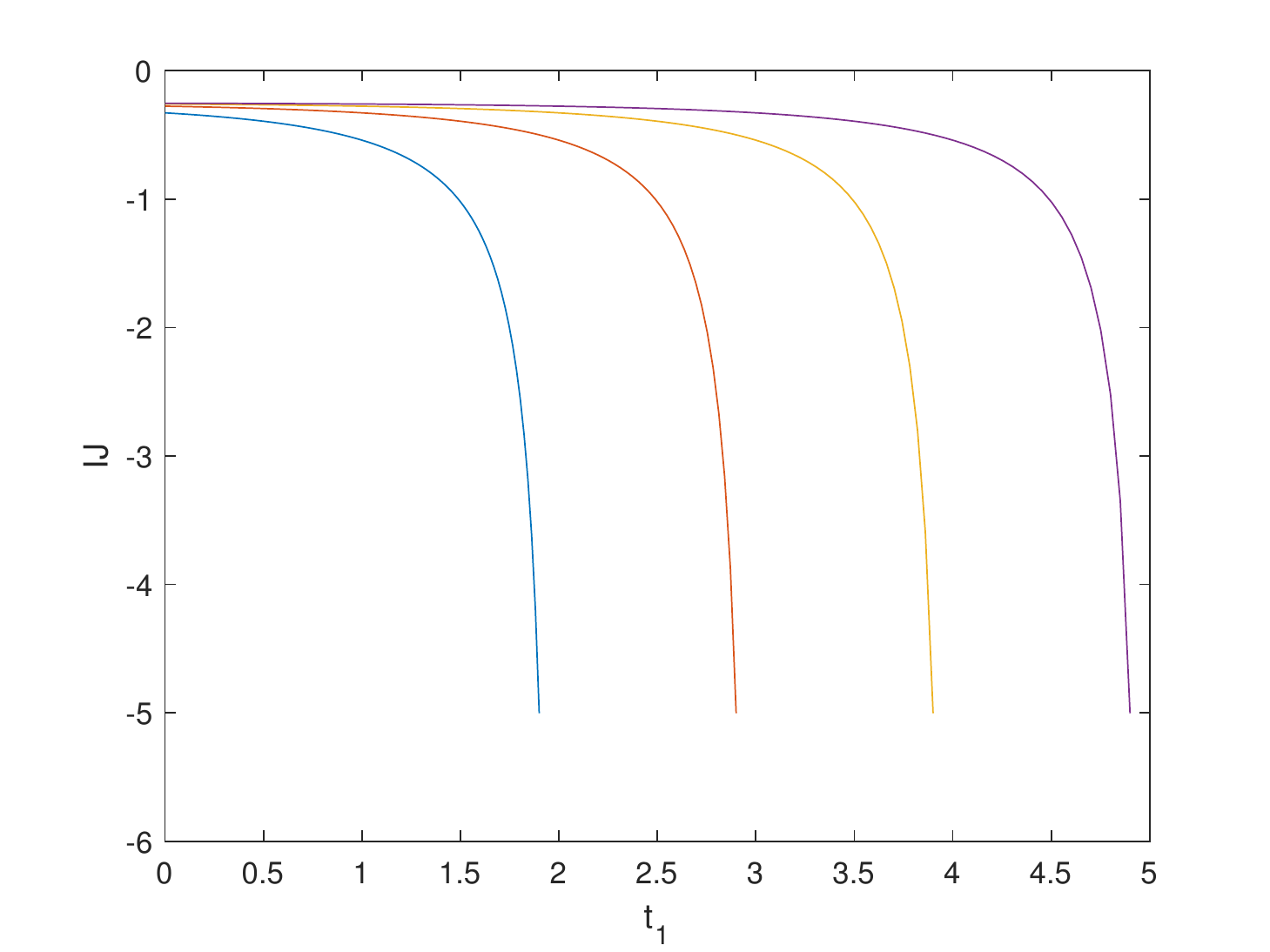}
         \caption{$IJ(t_1,t_2)$ as a function of $t_1$}
     \end{subfigure}
     \hfill
     \begin{subfigure}[b]{0.48\textwidth}
         \centering
         \includegraphics[width=\textwidth]{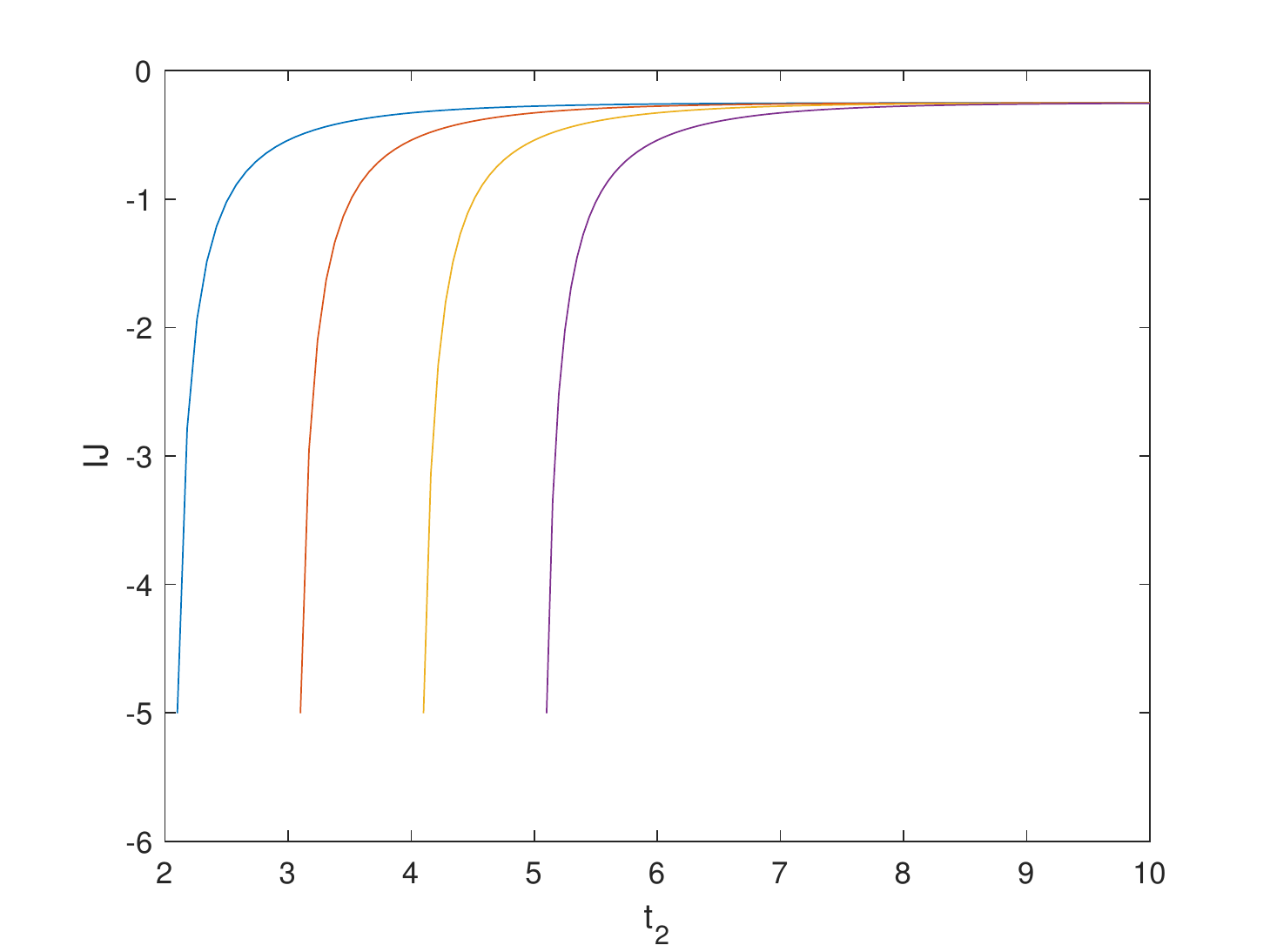}
         \caption{$IJ(t_1,t_2)$ as a function of $t_2$}
     \end{subfigure}
     \hfill
        \caption{Plot of $IJ$ in Example \ref{ex1} as a function of $t_1$ or $t_2$ fixing the other one with  $t_i=2$ (blue), 3 (red), 4 (yellow) and 5 (violet), $i=1,2$.}
        \label{fig-exp}
\end{figure}
\end{example}

\begin{example}
\label{ex2}
Let $X$ follow the Weibull distribution, $W2(\alpha,\lambda)$, with parameters $\alpha=\lambda=2$, $X\sim W2(2,2)$. The cdf and the pdf of $X$ are expressed as
\begin{eqnarray}
\nonumber
F(x)=1-\exp(-2x^2), \ \ \ f(x)=4x\exp(-2x^2), \ \ \ \ x\in(0,+\infty).
\end{eqnarray}
Since the expression of the interval extropy is not given in terms of elementary functions, in Figure \ref{fig-wei2}, we plot the interval extropy as a function of $t_1$ for fixed $t_2$ (Figure \ref{fig-wei2}(a)) and vice versa (Figure \ref{fig-wei2}(b)).
\begin{figure}
     \centering
     \begin{subfigure}[b]{0.48\textwidth}
         \centering
         \includegraphics[width=\textwidth]{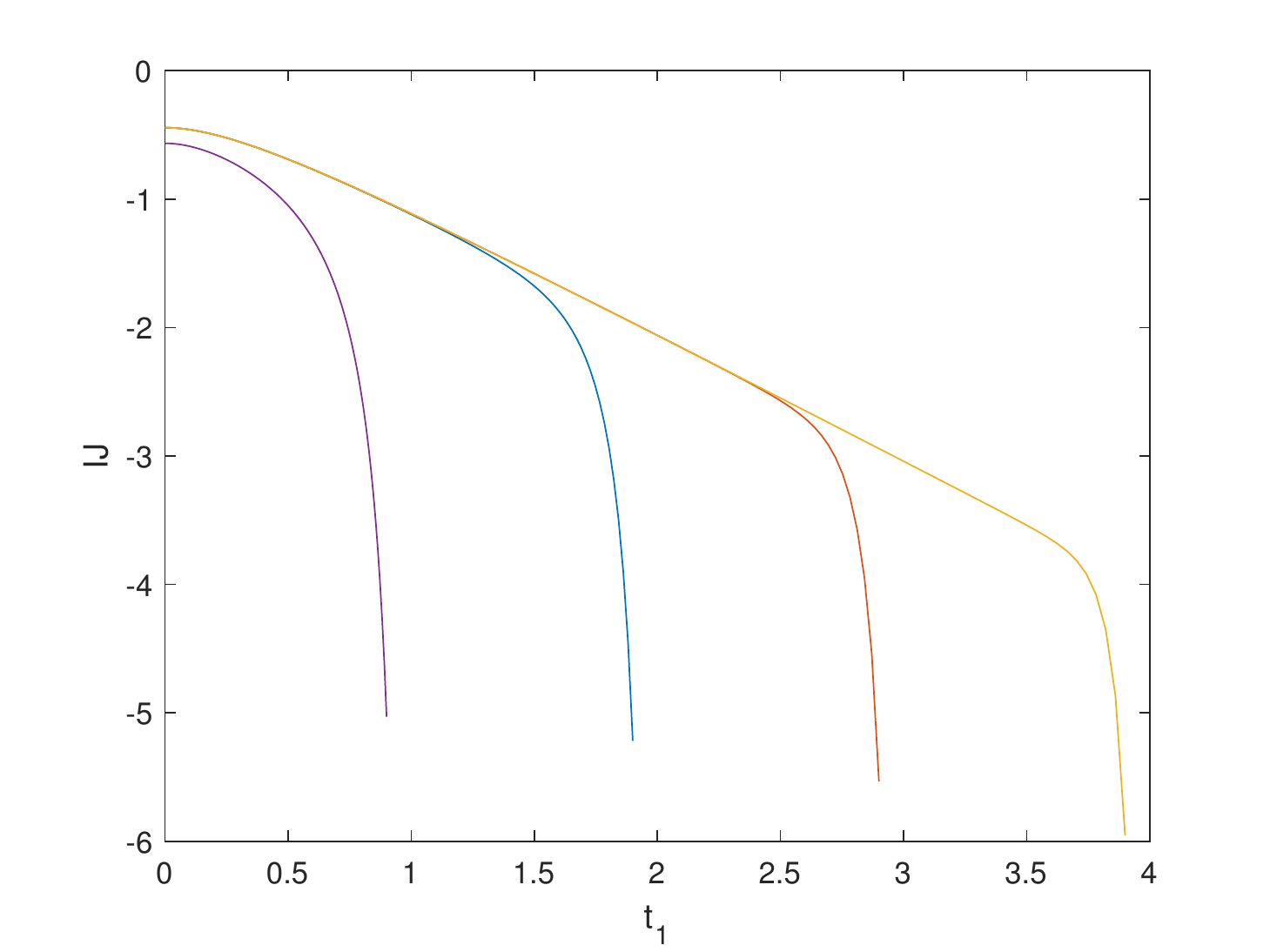}
         \caption{$IJ(t_1,t_2)$ as a function of $t_1$}
     \end{subfigure}
     \hfill
     \begin{subfigure}[b]{0.48\textwidth}
         \centering
         \includegraphics[width=\textwidth]{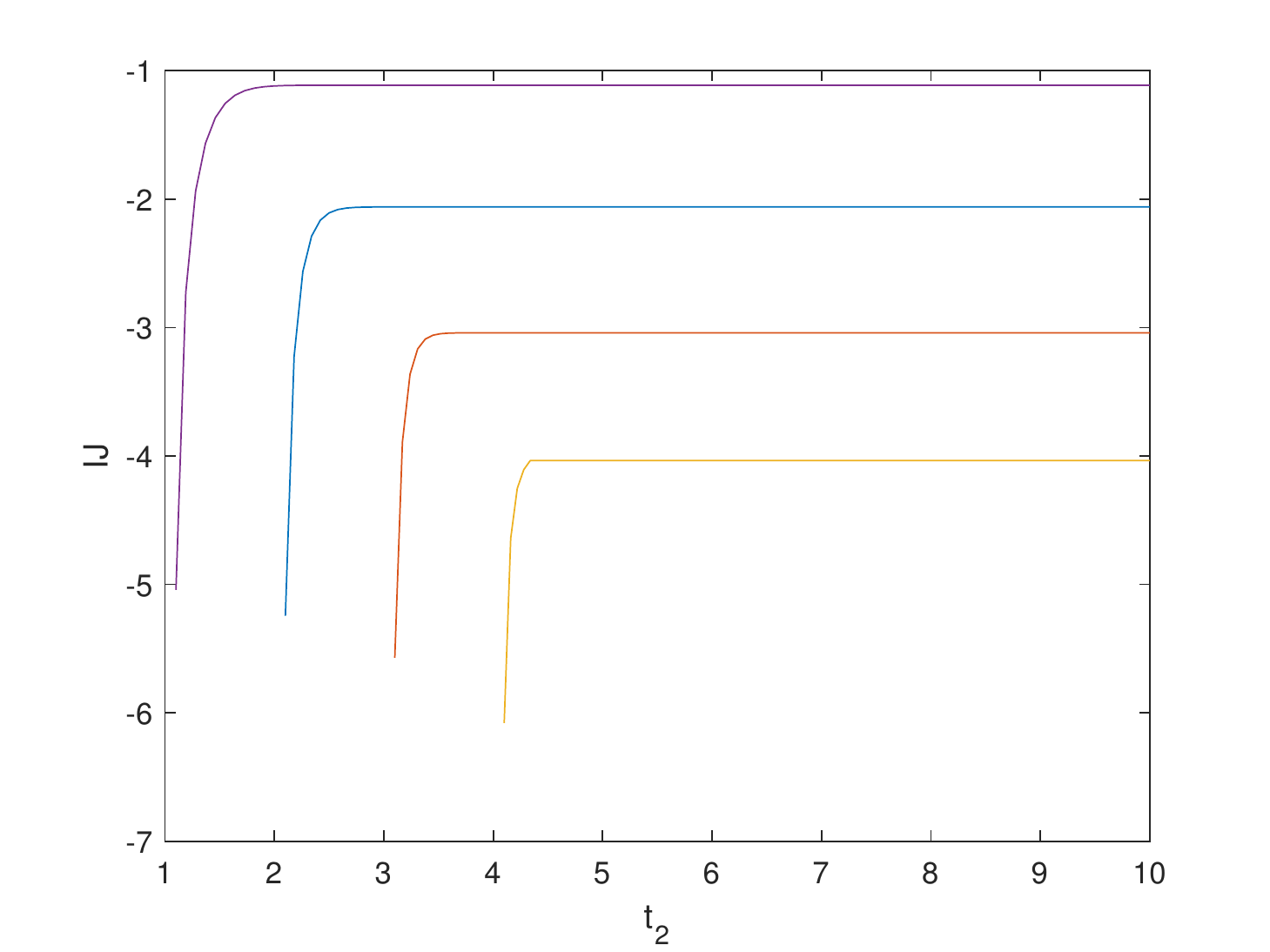}
         \caption{$IJ(t_1,t_2)$ as a function of $t_2$}
     \end{subfigure}
     \hfill
        \caption{Plot of $IJ$ in Example \ref{ex2} as a function of $t_1$ or $t_2$ fixing the other one with  $t_i=1$ (violet), 2 (blue), 3 (red) and 4 (yellow), $i=1,2$.}
        \label{fig-wei2}
\end{figure}
From Figure \ref{fig-wei2}(b) we observe an asymptotic behavior of the interval extropy as $t_2\to+\infty$ towards $-t_1$, i.e., when the interval extropy $IJ(t_1,t_2)$ reduces to the residual extropy $J(X_{t_1})$. In fact, the residual extropy of $X$ in $t$ can be derived as
\begin{eqnarray}
\nonumber
J(X_t)&=& -\frac{1}{2\exp(-4t^2)}\int_t^{+\infty} 16x^2 \exp(-4x^2) dx \\
\nonumber
&=&  -t-\frac{1}{\exp(-4t^2)}\int_t^{+\infty} \exp(-4x^2) dx  \\
\nonumber
&=& -t-\frac{1}{2\sqrt{2}\exp(-4t^2)}\int_{2\sqrt{2}t}^{+\infty} \exp\left(-\frac{y^2}{2}\right)dy=-t-\frac{\sqrt{\pi}}{2}\ \frac{\overline F_Z(2\sqrt{2}t)}{\exp(-4t^2)},
\end{eqnarray}
where $\overline F_Z(\cdot)$ is the survival function of $Z\sim N(0,1)$.
\end{example}

\begin{example}
\label{ex3}
Let $X$ follow the Lognormal distribution, $Lognormal(\mu,\sigma^2)$, with parameters $\mu=0, \ \sigma^2=1$, $X\sim Lognormal(0,1)$. The pdf of $X$ is expressed as
\begin{eqnarray}
\nonumber
f(x)=\frac{1}{x \sqrt{2\pi}}\exp\left(-\frac{\log^2 x}{2}\right), \ \ \ \ x\in(0,+\infty).
\end{eqnarray}
Since the expression of the interval extropy is not given in terms of elementary functions, in Figure \ref{fig-logn}, we plot the interval extropy as a function of $t_1$ for fixed $t_2$ (Figure \ref{fig-logn}(a)) and vice versa (Figure \ref{fig-logn}(b)).
\begin{figure}
     \centering
     \begin{subfigure}[b]{0.48\textwidth}
         \centering
         \includegraphics[width=\textwidth]{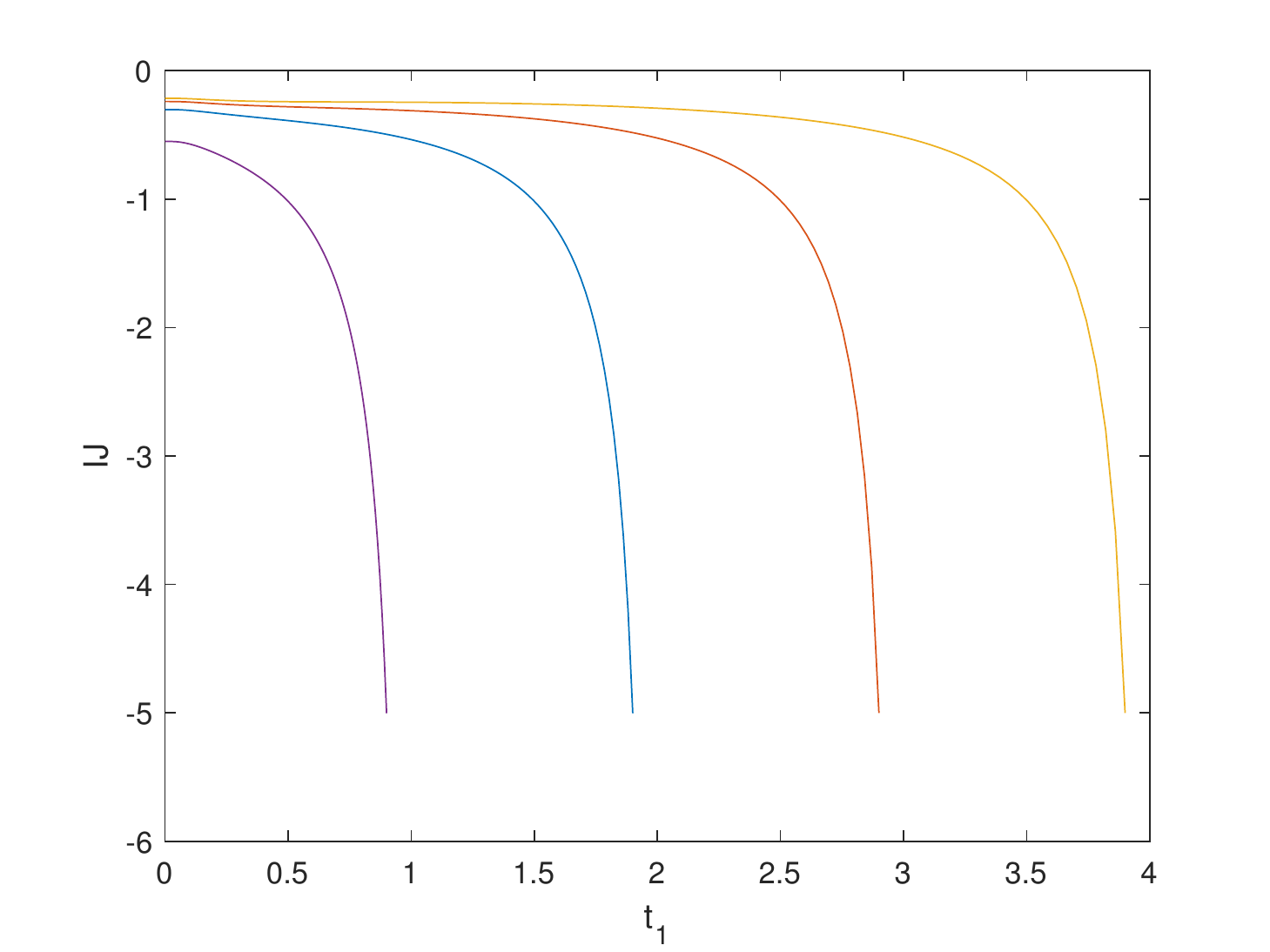}
         \caption{$IJ(t_1,t_2)$ as a function of $t_1$}
     \end{subfigure}
     \hfill
     \begin{subfigure}[b]{0.48\textwidth}
         \centering
         \includegraphics[width=\textwidth]{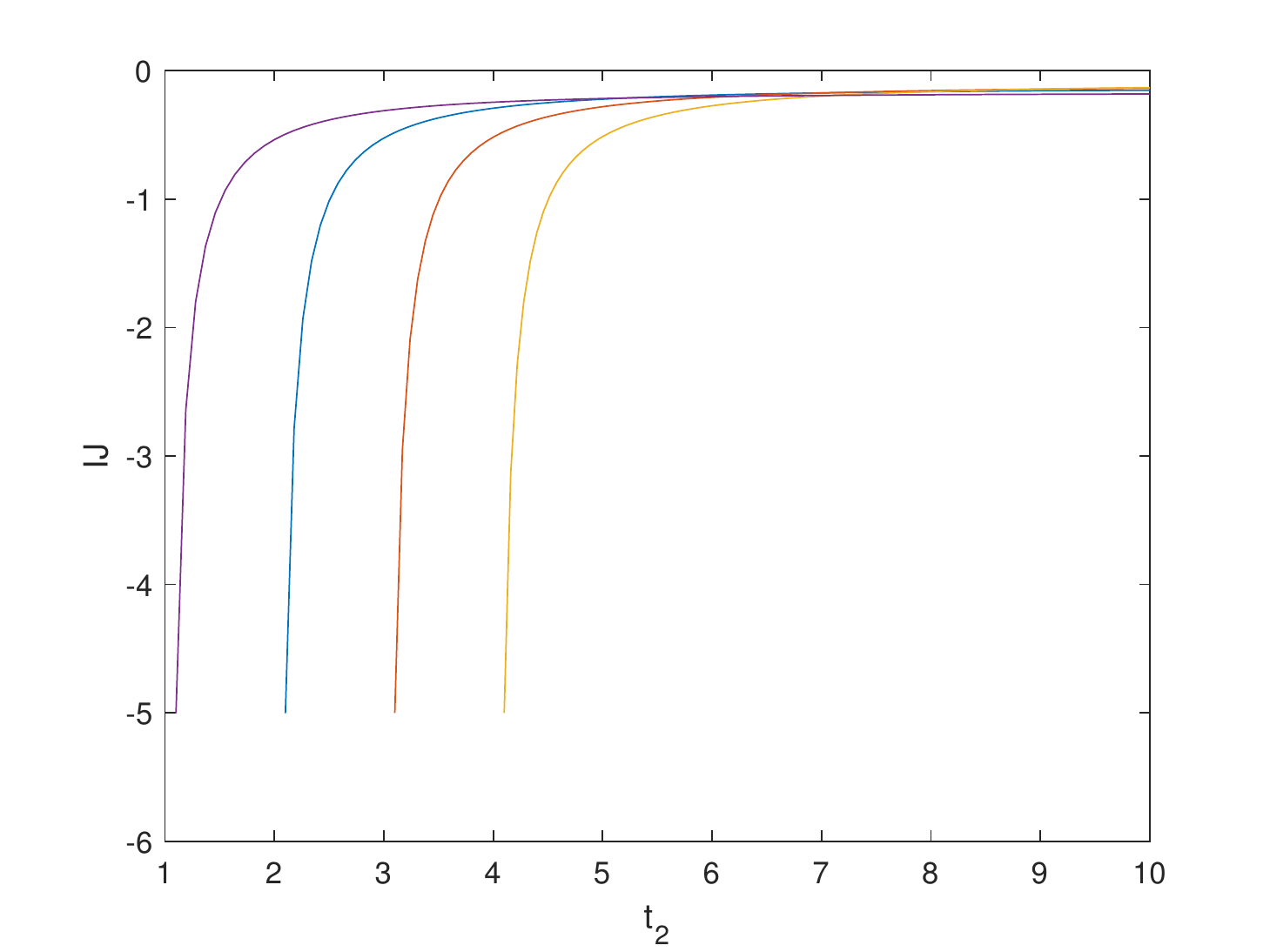}
         \caption{$IJ(t_1,t_2)$ as a function of $t_2$}
     \end{subfigure}
     \hfill
        \caption{Plot of $IJ$ in Example \ref{ex3} as a function of $t_1$ or $t_2$ fixing the other one with  $t_i=1$ (violet), 2 (blue), 3 (red) and 4 (yellow), $i=1,2$.}
        \label{fig-logn}
\end{figure}
\end{example}

Based on the above examples, it could seem that the interval extropy is always decreasing with respect to $t_1$ and always increasing with respect to $t_2$. In the following, we provide two counterexamples to show that the interval extropy can be non monotonous with respect to $t_1$ and $t_2$.

\begin{example}
\label{exnew}
Let $X$ be a random variable with support $S=(a,+\infty)$, $a>0$, whose cdf is defined as $F(x)=1-\left(\frac{a}{x}\right)^b$, $b>0$. The interval extropy of $X$ can be obtained as follows
\begin{eqnarray}
\nonumber
IJ(t_1,t_2)&=&  -\frac{1}{2\left[\left(\frac{a}{t_1}\right)^b-\left(\frac{a}{t_2}\right)^b\right]^2}\int_{t_1}^{t_2} \frac{b^2a^{2b}}{x^{2b+2}}dx \\
\nonumber
&=&  \frac{1}{2\left[\frac{1}{t_1^b}-\frac{1}{t_2^b}\right]^2}\frac{b^2}{2b+1}\left[\frac{1}{t_2^{2b+1}}-\frac{1}{t_1^{2b+1}}\right]    \\
\nonumber
&=&\frac{b^2\left(t_1^{2b+1}-t_2^{2b+1}\right)}{2(2b+1)t_1t_2 \left(t_2^b-t_1^b\right)^2}.
\end{eqnarray}
Let us focus on the case $a=1$ and $b=10$. In Figure \ref{fignew} we have plotted the interval extropy of $X$ as a function of $t_1$ for fixed different values of $t_2$ and we can observe that it is initially increasing and then decreasing with respect to $t_1$.
\begin{figure}
     \centering
         \includegraphics[scale=0.5]{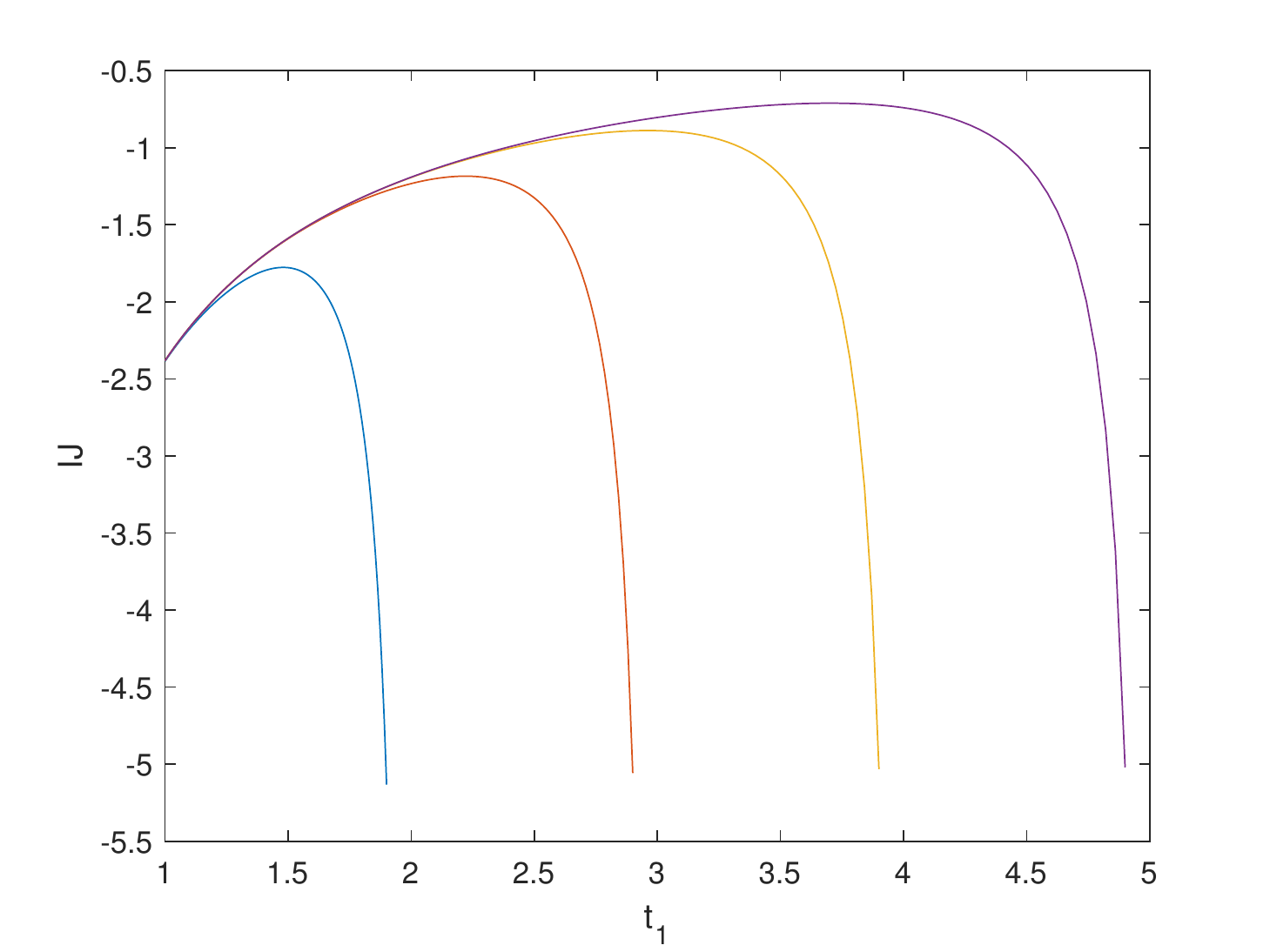}
        \caption{Plot of $IJ$ in Example \ref{exnew} as a function of $t_1$ fixing  $t_2=2$ (blue), 3 (red), 4 (yellow) and 5 (violet).}
        \label{fignew}
\end{figure}
\end{example}

\begin{example}
\label{exnew2}
Let $X$ be a random variable with cdf and pdf respectively defined as
\begin{eqnarray}
\nonumber
F(x)&=&\begin{cases} \exp\left(-\frac{1}{2}-\frac{1}{x}\right), &\mbox{ if } \ \ x\in(0,1] \\
                             \exp\left(-2+\frac{x^2}{2}\right),  &\mbox{ if } \ \ x\in[1,2)
\end{cases}
\\
\nonumber
f(x)&=&\begin{cases} \exp\left(-\frac{1}{2}-\frac{1}{x}\right)\frac{1}{x^2}, &\mbox{ if } \ \ x\in(0,1] \\
                             \exp\left(-2+\frac{x^2}{2}\right)x,  &\mbox{ if } \ \ x\in[1,2).
\end{cases}
\end{eqnarray}
In Figure \ref{fignew2} we have plotted the interval extropy as a function of $t_2\in(1,2)$ with fixed $t_1=0.1$ and we can observe a non monotonic behavior.
\begin{figure}
     \centering
         \includegraphics[scale=0.5]{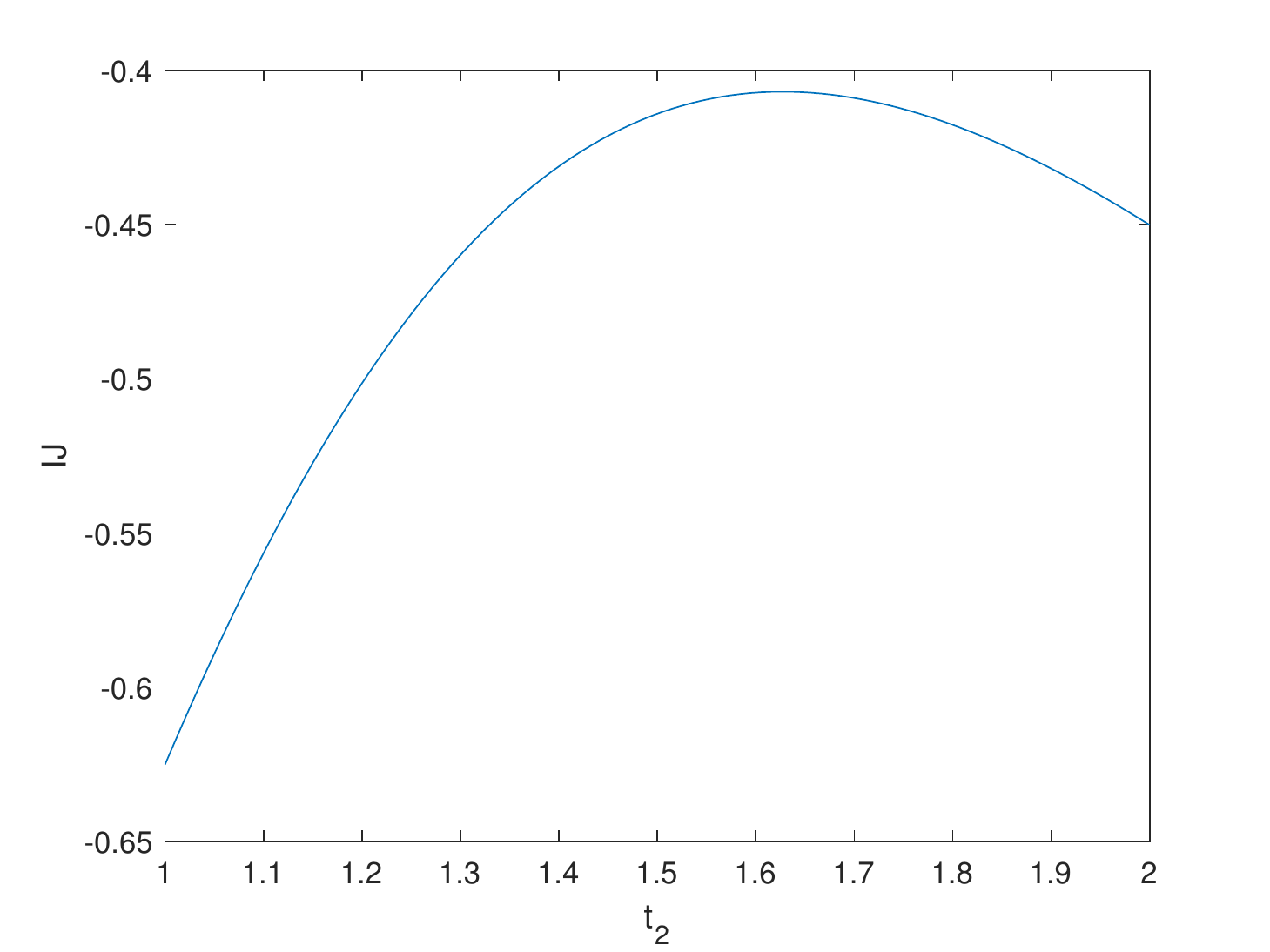}
        \caption{Plot of $IJ$ in Example \ref{exnew2} as a function of $t_2\in(1,2)$ fixing  $t_1=0.1$.}
        \label{fignew2}
\end{figure}
\end{example}

In the following theorem, we show the connection among Extropy and its dynamic versions.
\begin{theorem}
\label{thm1}
Let $X$ be a random variable denoting the lifetime of a component. For all $0<t_1<t_2<+\infty$ the extropy can be decomposed as follows:
\begin{eqnarray}
J(X)=F^2(t_1) J(_{t_1}X)+({F}(t_2)-{F}(t_1))^2 IJ(t_1,t_2)+ \overline{F}^2(t_2) J(X_{t_2}), \label{decomposition}
\end{eqnarray}
i.e., the Extropy is a function of PEx, REx and IEx. 
\end{theorem}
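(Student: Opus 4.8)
The plan is to prove the identity by direct computation, starting from the definition of the extropy in (\ref{extropy}) and exploiting the additivity of the integral over the partition of the support into $(0,t_1)$, $(t_1,t_2)$ and $(t_2,+\infty)$. First I would write, for any fixed $0<t_1<t_2<+\infty$,
\begin{eqnarray}
\nonumber
J(X)=-\frac{1}{2}\int_0^{+\infty}f^2(x)\,dx=-\frac{1}{2}\left(\int_0^{t_1}f^2(x)\,dx+\int_{t_1}^{t_2}f^2(x)\,dx+\int_{t_2}^{+\infty}f^2(x)\,dx\right),
\end{eqnarray}
which is legitimate because $f^2$ is non-negative and integrable over $(0,+\infty)$.

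The key step is then to recognize each of the three integrals as a rescaled dynamic extropy. From the definitions of the past extropy, the interval extropy in (\ref{Interval extropy}), and the residual extropy, one has respectively
\begin{eqnarray}
\nonumber
\int_0^{t_1}f^2(x)\,dx=-2F^2(t_1)\,J(_{t_1}X),\qquad \int_{t_1}^{t_2}f^2(x)\,dx=-2(F(t_2)-F(t_1))^2\,IJ(t_1,t_2),
\end{eqnarray}
together with $\int_{t_2}^{+\infty}f^2(x)\,dx=-2\overline{F}^2(t_2)\,J(X_{t_2})$. Substituting these three expressions into the split integral and cancelling the prefactor $-\frac{1}{2}$ against the factor $-2$ appearing in each term yields exactly (\ref{decomposition}).

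There is no substantial obstacle here: the statement is a bookkeeping identity whose only content is that the multiplicative weights $F^2(t_1)$, $(F(t_2)-F(t_1))^2$ and $\overline{F}^2(t_2)$ occurring in (\ref{decomposition}) are precisely the squared normalizing constants that were divided out in the definitions of $J(_{t_1}X)$, $IJ(t_1,t_2)$ and $J(X_{t_2})$. The only points deserving a word of care are that $(t_1,t_2)\in D$, so that the denominator $F(t_2)-F(t_1)$ of the interval extropy does not vanish, and that $F(t_1)>0$ and $\overline{F}(t_2)>0$, so that the past and residual extropies are well defined; all of these hold under the standing assumption $0<t_1<t_2<+\infty$ together with $f$ being a genuine density on $(0,+\infty)$.
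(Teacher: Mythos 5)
Your proof is correct and follows exactly the same route as the paper's: split the integral defining $J(X)$ over $(0,t_1)$, $(t_1,t_2)$, $(t_2,+\infty)$ and identify each piece as the corresponding dynamic extropy multiplied by its squared normalizing constant. The brief remarks on non-vanishing denominators are a harmless addition not present in the paper.
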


\begin{proof}
From the definition of extropy (\ref{extropy}), we can write
\begin{eqnarray}
\nonumber
J(X)&=&-\frac{1}{2} \int_{0}^{+\infty}{f^2(x)}\,dx \\
\label{eq14}
&=&-\frac{1}{2} \int_{0}^{t_1}{f^2(x)}\,dx -\frac{1}{2} \int_{t_1}^{t_2}{f^2(x)}\,dx -\frac{1}{2} \int_{t_2}^{+\infty}{f^2(x)}\,dx.
\end{eqnarray}
Now, we observe that the terms in the RHS of (\ref{eq14}) are related to past extropy, interval extropy and residual extropy as
\begin{eqnarray}
\nonumber
-\frac{1}{2} \int_{0}^{t_1}{f^2(x)}\,dx &=& F^2(t_1) J(_{t_1}X),\\
\nonumber
-\frac{1}{2} \int_{t_1}^{t_2}{f^2(x)}\,dx &=&({F}(t_2)-{F}(t_1))^2 IJ(t_1,t_2), \\
\nonumber
-\frac{1}{2} \int_{t_2}^{+\infty}{f^2(x)}\,dx &=& \overline{F}^2(t_2) J(X_{t_2}),
\end{eqnarray}
and then we obtain the stated result.
\end{proof}

Relation (\ref{decomposition}) shows that the uncertainty about the failure time of a component consists of 3 parts:
\begin{itemize}
\item[$1^{st}$.] The uncertainty of the failure time in $(0,t_1)$; 
\item[$2^{nd}$.] The uncertainty of the failure time in $(t_1,t_2)$; 
\item[$3^{rd}$.] The uncertainty about the failure time in $(t_2,+\infty)$.
\end{itemize}

The corresponding aging classes are defined as follows.

\begin{definition}
\label{defdec}
 The random variable $X$ is said to have decreasing $IJ$ property if and only if for any fixed $t_2$, $IJ(t_1,t_2)$ is decreasing respect to $t_1$.
\end{definition}
\begin{definition}
\label{definc}
 The random variable $X$ is said to have increasing  $IJ$ property if and only if for any fixed $t_1$, $IJ(t_1,t_2)$ is increasing respect to $t_2$.
\end{definition}

\begin{remark}
As seen in Example \ref{ex1}, the exponential distribution satisfies both the conditions in Definitions \ref{defdec} and \ref{definc}.
\end{remark}

\begin{definition}
Let $X$ be a non-negative and absolutely continuous random variable with cdf $F$ and pdf $f$. The Generalized Failure Rate (GFR) functions of $X$ in $t_1$ and $t_2$ (with $F(t_2)-F(t_1)>0$) are defined in \cite{Navarro} as
\begin{equation}
h_i(t_1,t_2)=\frac{f(t_i)}{F(t_2)-F(t_1)}, \ \  \ \ i=1,2.
\end{equation}
\end{definition}

An upper bound in terms of GFR is obtained for Interval Extropy in the following theorem.
\begin{theorem}
\label{thmub}
Let $X$ be an absolutely continuous non-negative random variable. If IJ is increasing in $t_2$, then
\begin{eqnarray}
IJ(t_1,t_2)\leq -\frac{h_2(t_1,t_2)}{4}. \label{bound of IE2}
\end{eqnarray}
\end{theorem}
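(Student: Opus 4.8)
The plan is to convert the monotonicity hypothesis into a pointwise differential inequality for $IJ$ regarded as a function of $t_2$. First I would fix $t_1$ and differentiate
\[
IJ(t_1,t_2)=-\frac{1}{2(F(t_2)-F(t_1))^2}\int_{t_1}^{t_2}f^2(x)\,dx
\]
with respect to $t_2$. Abbreviating $D=F(t_2)-F(t_1)$, the Leibniz rule gives $\frac{\partial}{\partial t_2}\int_{t_1}^{t_2}f^2=f^2(t_2)$ and $\frac{\partial D}{\partial t_2}=f(t_2)$, so the quotient rule produces a closed-form expression for $\frac{\partial}{\partial t_2}IJ(t_1,t_2)$.

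The decisive step is to rewrite that derivative using the interval extropy itself rather than the integral. Since $\int_{t_1}^{t_2}f^2=-2D^2\,IJ(t_1,t_2)$ directly from the definition \eqref{Interval extropy}, substituting this back collapses the derivative to
\[
\frac{\partial}{\partial t_2}IJ(t_1,t_2)=-\frac{1}{2}h_2^2(t_1,t_2)-2\,h_2(t_1,t_2)\,IJ(t_1,t_2),
\]
where I have recognised $f(t_2)/D=h_2(t_1,t_2)$ and $f^2(t_2)/D^2=h_2^2(t_1,t_2)$. This identity is the real content of the argument, since it eliminates the integral entirely and relates the growth of $IJ$ to its own value through the generalized failure rate.

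Then I would invoke the hypothesis: ``$IJ$ increasing in $t_2$'' means $\frac{\partial}{\partial t_2}IJ(t_1,t_2)\ge 0$, hence $-\frac{1}{2}h_2^2-2h_2\,IJ\ge 0$. Dividing through by the positive quantity $h_2(t_1,t_2)$ and rearranging yields $IJ(t_1,t_2)\le -h_2(t_1,t_2)/4$, which is precisely \eqref{bound of IE2}.

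The one subtlety I expect to handle separately is the degenerate case $f(t_2)=0$, where $h_2(t_1,t_2)=0$ and one cannot divide by $h_2$. There, however, the claimed bound reduces to $IJ(t_1,t_2)\le 0$, which holds trivially because $IJ$ is nonpositive by construction (a negative constant times the nonnegative integral of $f^2$). Apart from this case distinction, the argument is routine once the derivative has been re-expressed in terms of $IJ$ and $h_2$; the only regularity needed is enough smoothness of $f$ to apply the Leibniz rule, which is guaranteed by the absolute continuity of $X$.
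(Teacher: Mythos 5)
Your proposal is correct and follows exactly the paper's argument: differentiate $IJ$ with respect to $t_2$, rewrite the derivative as $-\frac{1}{2}h_2^2(t_1,t_2)-2h_2(t_1,t_2)IJ(t_1,t_2)$, and use the nonnegativity of this derivative to rearrange into the bound. Your treatment of the degenerate case $f(t_2)=0$ is a small extra care the paper omits, but the route is the same.
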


\begin{proof}
By differentiating IEx with respect to $t_2$, we have
\begin{eqnarray} 
\frac{\partial IJ(t_1,t_2)}{\partial t_2}&=& -\frac{h^2_2(t_1,t_2)}{2}-2h_2(t_1,t_2)IJ(t_1,t_2). \label{derivative to t2}
\end{eqnarray}
If $IJ(t_1,t_2)$ is increasing in $t_2$ then (\ref{derivative to t2}) implies (\ref{bound of IE2}).
\end{proof}

In the following proposition, we analyze the effect of a linear transformation on the interval extropy.

\begin{proposition}
\label{pr1}
Let $X$ be a non negative and absolutely continuous random variable and let $Y=aX+b$ where $a>0$ and $b\ge0$. The interval extropy of $Y$ is given in terms of the interval extropy of $X$ as
\begin{equation}
\label{linear}
IJ_Y(t_1,t_2)=\frac{1}{a}IJ_X\left(\frac{t_1-b}{a},\frac{t_2-b}{a}\right),
\end{equation}
where $t_1,t_2\in S_Y$.
\end{proposition}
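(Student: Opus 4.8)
The plan is to express the cdf and pdf of $Y=aX+b$ in terms of those of $X$, substitute them into the definition (\ref{Interval extropy}) of interval extropy, and then perform a change of variables to recover the interval extropy of $X$ at the rescaled endpoints.

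First I would record the elementary relations for the affine transformation. Since $a>0$, the map $x\mapsto ax+b$ is increasing, so $F_Y(y)=F_X\left(\frac{y-b}{a}\right)$ and, differentiating, $f_Y(y)=\frac{1}{a}f_X\left(\frac{y-b}{a}\right)$. In particular the normalizing factor becomes $F_Y(t_2)-F_Y(t_1)=F_X\left(\frac{t_2-b}{a}\right)-F_X\left(\frac{t_1-b}{a}\right)$, which is exactly the normalizing factor appearing in $IJ_X$ at the transformed endpoints $\frac{t_1-b}{a}$ and $\frac{t_2-b}{a}$.

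Next I would substitute into $IJ_Y(t_1,t_2)=-\frac{1}{2(F_Y(t_2)-F_Y(t_1))^2}\int_{t_1}^{t_2}f_Y^2(y)\,dy$ and treat the integral by the change of variables $x=\frac{y-b}{a}$, so that $dy=a\,dx$ and the limits $t_1,t_2$ become $\frac{t_1-b}{a},\frac{t_2-b}{a}$. The integrand carries a factor $f_Y^2(y)=\frac{1}{a^2}f_X^2(x)$, and combining this with the Jacobian $a$ leaves a single overall factor $\frac{1}{a}$ in front of $\int f_X^2\,dx$. Collecting the normalizing factor from the previous step then yields precisely $\frac{1}{a}\,IJ_X\left(\frac{t_1-b}{a},\frac{t_2-b}{a}\right)$, as claimed.

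This argument is essentially a bookkeeping exercise and presents no genuine obstacle; the only point requiring care is the tracking of the powers of $a$, since the square of the density contributes $a^{-2}$ while the change of variables contributes $a^{+1}$, and these must combine to the single factor $a^{-1}$ rather than cancel or double. For completeness I would also note that the hypotheses $a>0$ (guaranteeing monotonicity of the transformation and a positive Jacobian) and $b\ge0$ (keeping $Y$ non-negative) ensure that the rescaled endpoints $\frac{t_1-b}{a}$ and $\frac{t_2-b}{a}$ lie in the admissible domain $D$ whenever $t_1,t_2\in S_Y$, so that the right-hand side of (\ref{linear}) is well defined.
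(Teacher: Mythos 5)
Your proposal is correct and follows exactly the same route as the paper's proof: express $F_Y$ and $f_Y$ via the affine relations, substitute into the definition of $IJ_Y$, and change variables so that the factor $a^{-2}$ from $f_Y^2$ combines with the Jacobian $a$ to give the overall $a^{-1}$. No gaps; the additional remark about the transformed endpoints lying in the admissible domain is a sensible, if minor, completeness check.
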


\begin{proof}
The cdf and the pdf of $Y$ can be expressed in terms of $F_X$ and $f_X$ as
\begin{equation}
\label{utu}
F_Y(x)=F_X\left(\frac{x-b}{a}\right), \ \ \ f_Y(x)=\frac{1}{a}f_X\left(\frac{x-b}{a}\right).
\end{equation}
Hence, the interval extropy of $Y$ can be expressed as
\begin{eqnarray}
\nonumber
IJ_Y(t_1,t_2)&=&-\frac{1}{2\left(F_X\left(\frac{t_2-b}{a}\right)-F_X\left(\frac{t_1-b}{a}\right)\right)^2}\int_{t_1}^{t_2} \frac{1}{a^2}f_X^2\left(\frac{x-b}{a}\right)\,dx \\
\nonumber
&=& -\frac{1}{2\left(F_X\left(\frac{t_2-b}{a}\right)-F_X\left(\frac{t_1-b}{a}\right)\right)^2}\int_{\frac{t_1-b}{a}}^{\frac{t_2-b}{a}} \frac{1}{a}f_X^2(x)\,dx \\
\nonumber
&=&\frac{1}{a}IJ_X\left(\frac{t_1-b}{a},\frac{t_2-b}{a}\right),
\end{eqnarray}
which completes the proof.
\end{proof}

In the following theorem, we give a characterization of the exponential distribution based on the interval extropy.

\begin{theorem}
Let $X$ be a random variable with support $(0,+\infty)$, differentiable and strictly positive pdf $f$ and cdf $F$. Then, $X$ is exponentially distributed if, and only if, for all $(t_1,t_2)$ such that $ 0<t_1<t_2<+\infty$, the following relation holds
\begin{equation}
\label{eqcaex}
IJ(t_1,t_2)=-\frac{1}{4} \left[h_1(t_1,t_2)+h_2(t_1,t_2)\right].
\end{equation}
\end{theorem}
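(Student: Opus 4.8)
The plan is to reduce the stated identity to an integral equation for the density $f$ and then extract a first-order ODE for $\log f$ by differentiating twice.

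For the \emph{only if} direction I would simply substitute $f(x)=\lambda e^{-\lambda x}$ and $F(x)=1-e^{-\lambda x}$ into the right-hand side. Since $F(t_2)-F(t_1)=e^{-\lambda t_1}-e^{-\lambda t_2}$, the generalized failure rates are $h_i(t_1,t_2)=\lambda e^{-\lambda t_i}/(e^{-\lambda t_1}-e^{-\lambda t_2})$, so that $-\tfrac14\bigl(h_1+h_2\bigr)=-\tfrac{\lambda}{4}\,(e^{-\lambda t_1}+e^{-\lambda t_2})/(e^{-\lambda t_1}-e^{-\lambda t_2})$, which is exactly the value of $IJ(t_1,t_2)$ computed in Example~\ref{ex1}. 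This settles necessity.

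For the \emph{if} direction, which is the substantive part, I would first clear denominators. Writing $G=F(t_2)-F(t_1)$ and recalling $IJ(t_1,t_2)=-\frac{1}{2G^2}\int_{t_1}^{t_2}f^2(x)\,dx$, the hypothesis $IJ(t_1,t_2)=-\frac{f(t_1)+f(t_2)}{4G}$ is equivalent to the integral equation
\[
\int_{t_1}^{t_2} f^2(x)\,dx=\frac{1}{2}\bigl(F(t_2)-F(t_1)\bigr)\bigl(f(t_1)+f(t_2)\bigr),
\]
valid for all $0<t_1<t_2<+\infty$. Using that $f$ is differentiable, I would differentiate this identity with respect to $t_2$ (the left side yielding $f^2(t_2)$), simplify, and obtain $f(t_2)\bigl(f(t_2)-f(t_1)\bigr)=\bigl(F(t_2)-F(t_1)\bigr)f'(t_2)$. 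Differentiating this new relation with respect to $t_1$ then annihilates all but two terms and leaves $f(t_2)f'(t_1)=f(t_1)f'(t_2)$. Since $f$ is strictly positive, this says $f'(t_1)/f(t_1)=f'(t_2)/f(t_2)$ for every pair $t_1<t_2$, so $(\log f)'$ is a constant $c$ and hence $f(x)=Ae^{cx}$. The requirement that $f$ be a density supported on $(0,+\infty)$ forces $c<0$ and $A=-c$, i.e. $c=-\lambda$ and $A=\lambda$ for some $\lambda>0$, which is the exponential law; a direct substitution confirms this $f$ indeed satisfies the integral equation, closing the equivalence.

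I expect the main obstacle to be justifying and carrying out the two successive differentiations cleanly, in particular verifying that the exponential form recovered from $(\log f)'=c$ is automatically consistent with the original (undifferentiated) equation and with the normalization, rather than merely with its derivatives, so that no spurious or improperly normalized solution survives.
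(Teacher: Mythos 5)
Your proposal is correct and follows essentially the same route as the paper: reduce the hypothesis to the integral equation $\int_{t_1}^{t_2} f^2 = \tfrac12(F(t_2)-F(t_1))(f(t_1)+f(t_2))$, differentiate once in each variable to reach $f(t_1)f'(t_2)=f'(t_1)f(t_2)$, and conclude $(\log f)'$ is constant so that normalization forces the exponential law. The only (immaterial) difference is that you differentiate in $t_2$ first and then $t_1$, whereas the paper does the reverse; your closing remark about checking consistency with the undifferentiated equation is handled exactly as you suggest, by the direct verification already done in the necessity part.
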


\begin{proof}
Let us suppose $X\sim Exp(\lambda)$. In Example \ref{ex1}, we have evaluated the interval extropy that is given by
\begin{eqnarray}
\nonumber
IJ(t_1,t_2)= -\frac{\lambda}{4} \cdot \frac{e^{-\lambda t_2}+e^{-\lambda t_1}}{e^{-\lambda t_1}-e^{-\lambda t_2}}.
\end{eqnarray}
Moreover, about GFR functions of $X$, we have
\begin{eqnarray}
\nonumber
h_1(t_1,t_2)&=&\frac{\lambda e^{-\lambda t_1}}{e^{-\lambda t_1}-e^{-\lambda t_2}}, \\
\nonumber
h_2(t_1,t_2)&=&\frac{\lambda e^{-\lambda t_2}}{e^{-\lambda t_1}-e^{-\lambda t_2}},
\end{eqnarray}
and then the first part of the proof is completed.

Conversely, let us suppose (\ref{eqcaex}) holds. Then, by making explicit the interval extropy and GFR functions, we obtain
\begin{eqnarray}
\nonumber
-\frac{1}{2({F}(t_2)-{F}(t_1))^2} \int_{t_1}^{t_2}{f^2(x)}\,dx= -\frac{f(t_1)+f(t_2)}{4(F(t_2)-F(t_1))}.
\end{eqnarray}
From the above equation, we have
\begin{equation}
\label{eqpr1}
\int_{t_1}^{t_2}{f^2(x)}\,dx=\frac{1}{2}(F(t_2)-F(t_1))(f(t_1)+f(t_2)).
\end{equation}
By differentiating both sides of (\ref{eqpr1}) with respect to $t_1$, we get
\begin{eqnarray}
\nonumber
-f^2(t_1)=-\frac{1}{2}f(t_1)(f(t_1)+f(t_2))+\frac{1}{2}f'(t_1)(F(t_2)-F(t_1)),
\end{eqnarray}
which reduces to
\begin{equation}
\label{eqpr2}
-f^2(t_1)+f(t_1)f(t_2)=f'(t_1)(F(t_2)-F(t_1)).
\end{equation}
By differentiating both sides of (\ref{eqpr2}) with respect to $t_2$, we get
\begin{eqnarray}
\nonumber
f(t_1)f'(t_2)=f'(t_1)f(t_2),
\end{eqnarray}
which is equivalent to
\begin{eqnarray}
\nonumber
\frac{f'(t_1)}{f(t_1)}=\frac{f'(t_2)}{f(t_2)},
\end{eqnarray}
i.e., the ratio is constant for $x>0$,
\begin{equation}
\label{eqpr3}
\frac{f'(x)}{f(x)}=A.
\end{equation}
Hence, by integrating both sides of (\ref{eqpr3}) from $0$ to $t$, we get
\begin{eqnarray}
\nonumber
f(t)=f(0)\ e^{At},
\end{eqnarray}
and in order to satisfy the condition of normalization for the pdf $f$, we need $A=-f(0)$, i.e., $f$ is the pdf of an exponential distribution.
\end{proof}

\begin{remark}
The equilibrium random variable $Y$ associated to a renewal process with lifetime $X$ is a random variable of primary interest in the context of reliability theory, as pointed out in Barlow and Proschan \cite{barlow}. The survival function and the probability density function of $Y$ are expressed as
\begin{eqnarray}
\nonumber
\overline F_Y(t)= \frac{1}{\mathbb E(X)} \int_t^{+\infty} \overline F_X(x)\,dx, \ \ \ 
f_Y(t)=\frac{\overline{F}_X(t)}{\mathbb E(X)}
\end{eqnarray}
where $\mathbb E(X)$ is the expectation of $X$. We can define Extropy and its interval version of $Y$ as follows:
\begin{eqnarray}
\nonumber
J(Y)=-\frac{1}{2 \mathbb E^{2}(X)} \int_{0}^{\infty}{\overline{F}_X^2(x)}\,dx,
\end{eqnarray}
\begin{eqnarray}
\nonumber
IJ_Y(t_1,t_2)=-\frac{1}{2} \frac{\int_{t_1}^{t_2}{\overline F_X^2(x)}\,dx}{\left(\int_{t_1}^{t_2}{\overline F_X(x)}\,dx\right)^2}.
\end{eqnarray}
\end{remark}

\section{Weighted Interval Extropy}
In order to give importance to the value assumed by the random variable, it is significant to introduce weighted versions of the measures of uncertainty. In fact, most of the well-known measure of discrimination are position-free, in the sense that they assume the same value for $X$ and $X+b$ for any $b\in\mathbb R$. In Proposition \ref{pr1}, we have showed that the interval extropy does not change under translations since, for $Y=X+b$, we have $IJ_Y(t_1+b,t_2+b)=IJ_X(t_1,t_2)$. In this section, we will introduce and study the weighted version of the interval extropy and we will show that it is not invariant under translations. 

Suppose $X$ be a non-negative absolutely continuous random variable. For all $t_1$ and $t_2$ such that ${(t_1,t_2)}\in D={\lbrace(u,v)\in R_+^2: F(u)<F(v)\rbrace}$ we define the Weighted Interval Extropy (WIEx) of $X$ as
\begin{eqnarray}
IJ^w(t_1,t_2)=-\frac{1}{2({F}(t_2)-{F}(t_1))^2} \int_{t_1}^{t_2}{x f^2(x)}\,dx, \label{Weighted Interval extropy}
\end{eqnarray}
in the same way in which in \cite{Misagh 2011} the weighted interval entropy has been defined.

\begin{remark} We notice that 
$$\lim _{t_1\to 0}IJ^w(t_1,t_2)= IJ^w(_{t_2}X) \mbox{  and  } \lim _{t_2\to +\infty}IJ^w(t_1,t_2)= IJ^w(X_{t_1}),$$
where $IJ^w(_{t_2}X)$ and $IJ^w(X_{t_1})$ are Weighted Past Extropy at time $t_2$ and Weighted Residual Extropy at time $t_1$, respectively.
\end{remark}

\begin{example}
\label{ex4}
Let $X\sim Exp(\lambda)$, $\lambda>0$. Based on (\ref{Weighted Interval extropy}), we evaluate the weighted interval extropy of $X$ for $0<t_1<t_2<+\infty$ and we obtain
\begin{eqnarray}
\nonumber
IJ^w(t_1,t_2)&=&\frac{-1}{2(e^{-\lambda t_1}-e^{-\lambda t_2})^2}\int_{t_1}^{t_2} x \lambda^2 e^{-2\lambda x} dx \\
\nonumber
&=& -\frac{\lambda}{4} \cdot \frac{t_1 e^{-2\lambda t_1}-t_2e^{-2\lambda t_2}}{(e^{-\lambda t_1}-e^{-\lambda t_2})^2}-\frac{1}{8} \cdot \frac{e^{-\lambda t_2}+e^{-\lambda t_1}}{e^{-\lambda t_1}-e^{-\lambda t_2}}.
\end{eqnarray}
In Figure \ref{fig-expw}, we plot the interval extropy as a function of $t_1$ for fixed $t_2$ (Figure \ref{fig-expw}(a)) and vice versa (Figure \ref{fig-expw}(b)) for $\lambda=1$.
\begin{figure}
     \centering
     \begin{subfigure}[b]{0.48\textwidth}
         \centering
         \includegraphics[width=\textwidth]{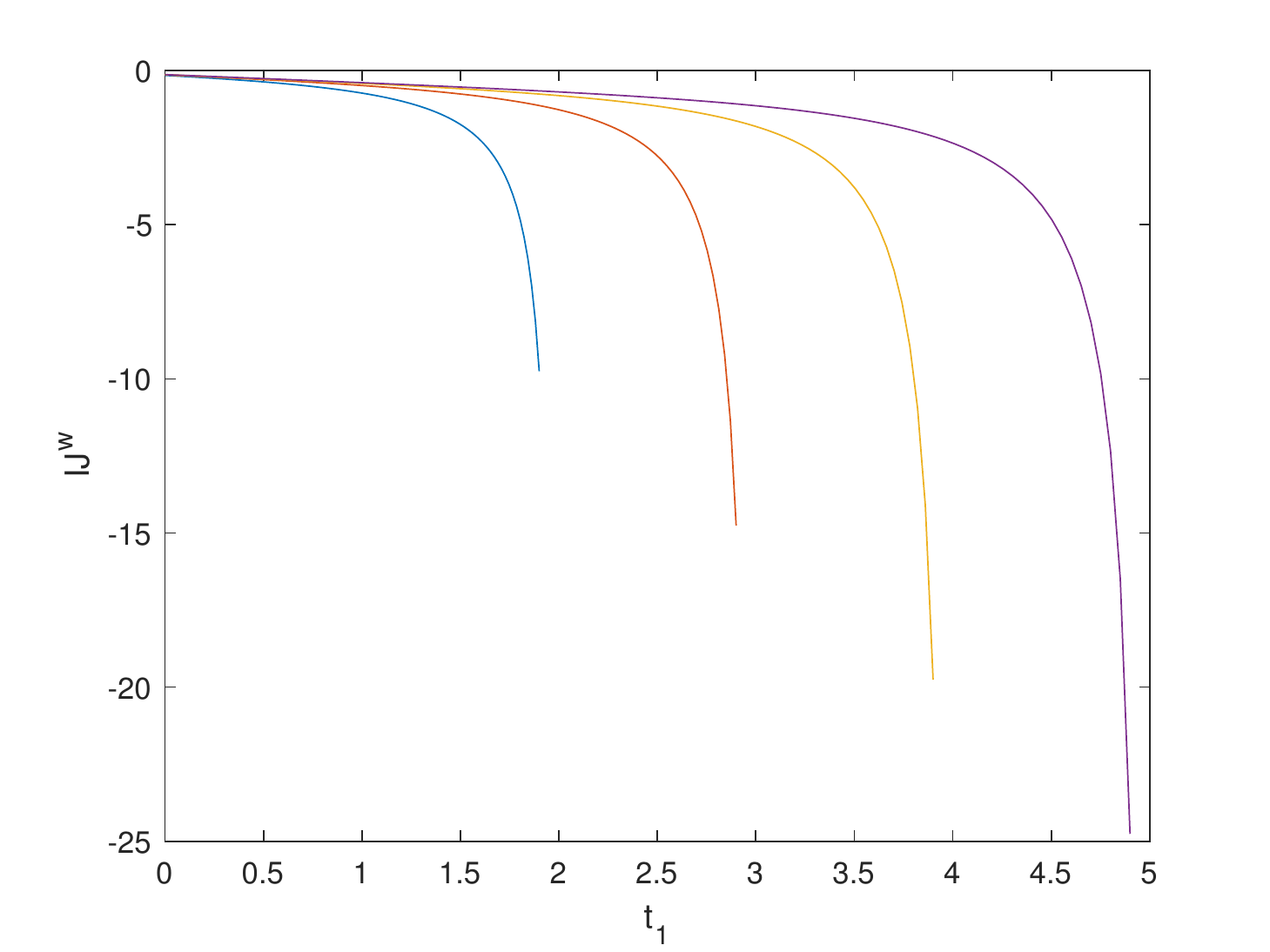}
         \caption{$IJ^w(t_1,t_2)$ as a function of $t_1$}
     \end{subfigure}
     \hfill
     \begin{subfigure}[b]{0.48\textwidth}
         \centering
         \includegraphics[width=\textwidth]{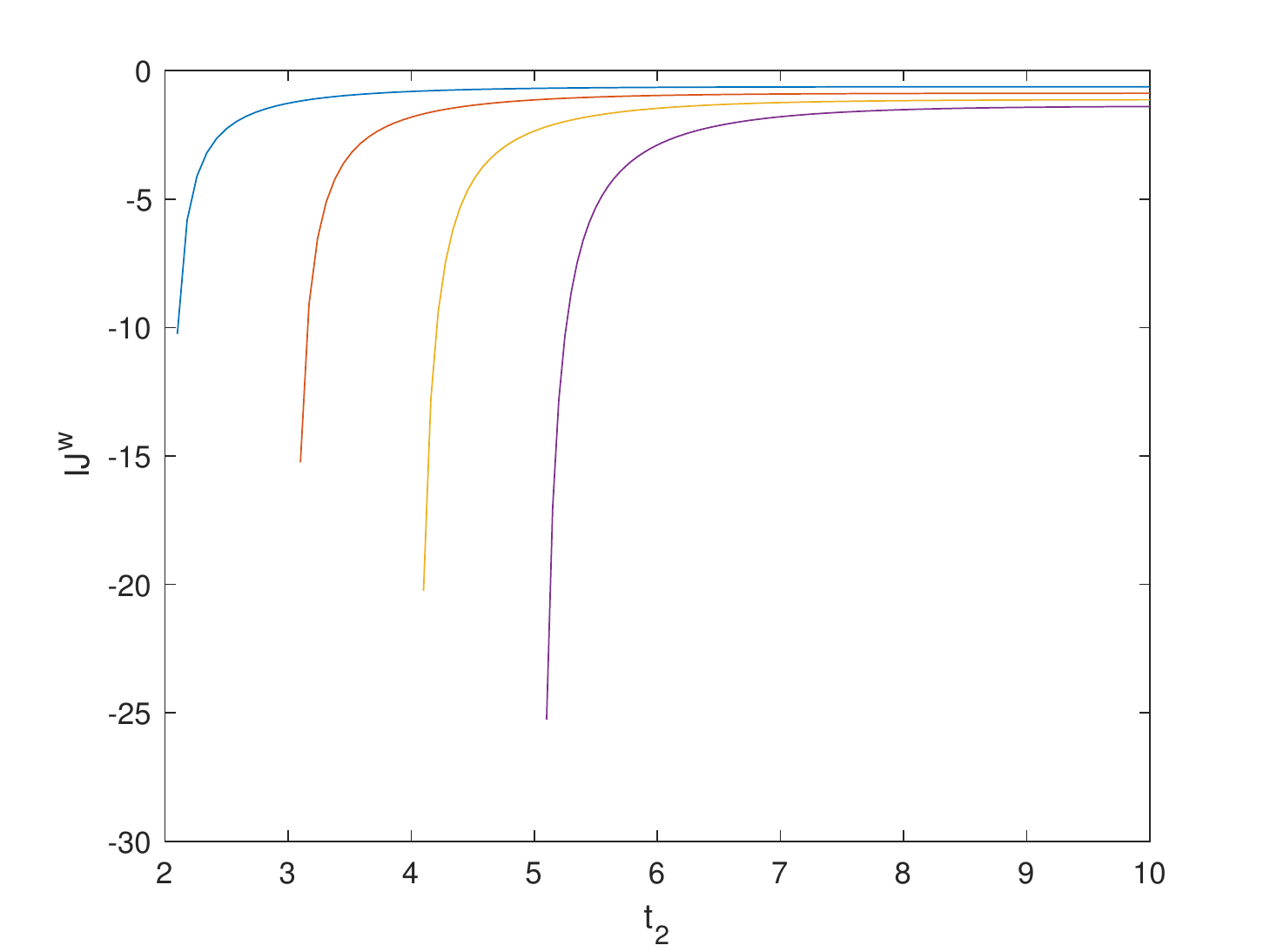}
         \caption{$IJ^w(t_1,t_2)$ as a function of $t_2$}
     \end{subfigure}
     \hfill
        \caption{Plot of $IJ^w$ in Example \ref{ex4} as a function of $t_1$ or $t_2$ fixing the other one with  $t_i=2$ (blue), 3 (red), 4 (yellow) and 5 (violet), $i=1,2$.}
        \label{fig-expw}
\end{figure}
\end{example}

\begin{example}
\label{ex5}
Let $X$ follow the Weibull distribution with parameters $\alpha=\lambda=2$, $X\sim W2(2,2)$. Based on (\ref{Weighted Interval extropy}), we evaluate the weighted interval extropy of $X$ for $0<t_1<t_2<+\infty$ and we obtain
\begin{eqnarray}
\nonumber
IJ^w(t_1,t_2)&=&\frac{-1}{2(\exp(-2 t_1^2)-\exp(-2 t_2^2))^2}\int_{t_1}^{t_2} 16 x^3 \exp(-4x^2) dx \\
\nonumber
&=&  \frac{t_2^2 \exp(-4 t_2^2)-t_1^2 \exp(-4 t_1^2)}{(\exp(-2 t_1^2)-\exp(-2 t_2^2))^2}-\frac{1}{4} \cdot \frac{\exp(-2 t_1^2)+\exp(-2 t_2^2)}{\exp(-2 t_1^2)-\exp(-2 t_2^2)}.
\end{eqnarray}
In Figure \ref{fig-wei2w}, we plot the interval extropy as a function of $t_1$ for fixed $t_2$ (Figure \ref{fig-wei2w}(a)) and vice versa (Figure \ref{fig-wei2w}(b)).
\begin{figure}
     \centering
     \begin{subfigure}[b]{0.48\textwidth}
         \centering
         \includegraphics[width=\textwidth]{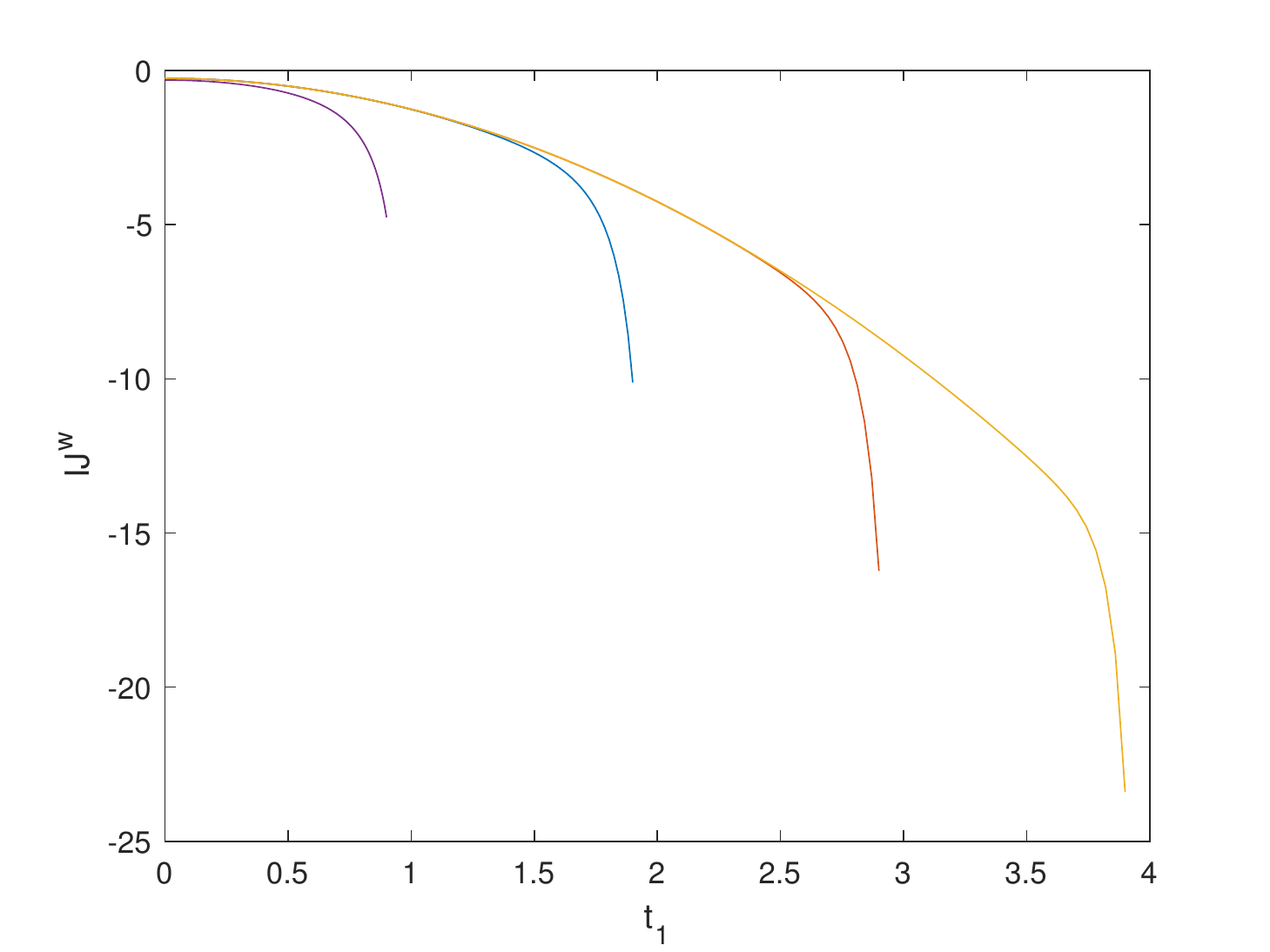}
         \caption{$IJ^w(t_1,t_2)$ as a function of $t_1$}
     \end{subfigure}
     \hfill
     \begin{subfigure}[b]{0.48\textwidth}
         \centering
         \includegraphics[width=\textwidth]{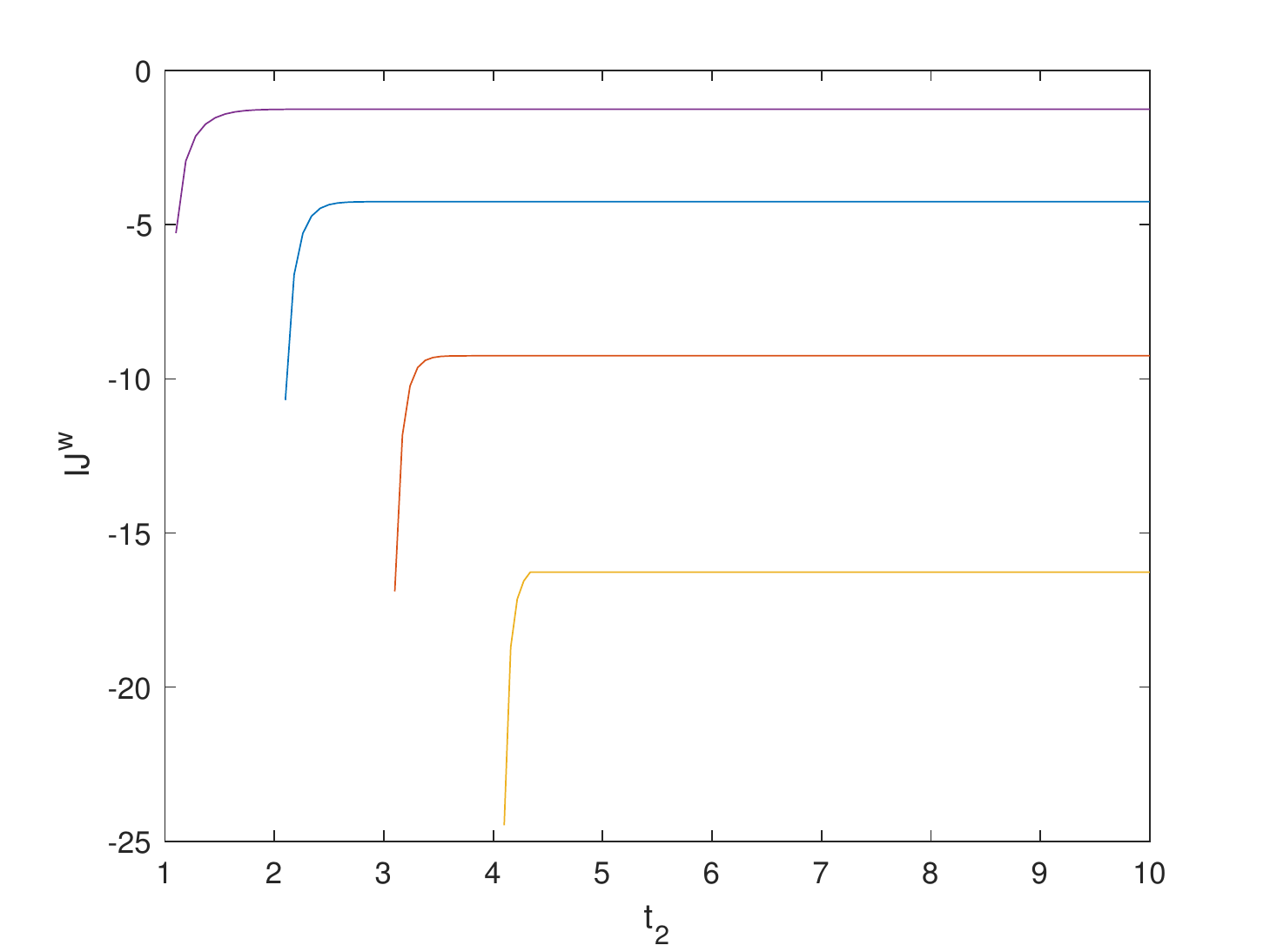}
         \caption{$IJ^w(t_1,t_2)$ as a function of $t_2$}
     \end{subfigure}
     \hfill
        \caption{Plot of $IJ^w$ in Example \ref{ex5} as a function of $t_1$ or $t_2$ fixing the other one with  $t_i=1$ (violet), 2 (blue), 3 (red) and 4 (yellow), $i=1,2$.}
        \label{fig-wei2w}
\end{figure}
From Figure \ref{fig-wei2w}(b) we observe an asymptotic behavior of the weighted interval extropy as $t_2\to+\infty$, i.e., when the weighted interval extropy $IJ^w(t_1,t_2)$ reduces to the weighted residual extropy $J^w(X_{t_1})$. In fact, the weighted residual extropy of $X$ in $t$ can be expressed as
\begin{eqnarray}
\nonumber
J(X_t)=-t^2-\frac{1}{4}.
\end{eqnarray}
\end{example}

\begin{example}
\label{ex6}
Let $X$ follow the Lognormal distribution with parameters $\mu=0, \ \sigma^2=1$, $X\sim Lognormal(0,1)$. Since the expression of the weighted interval extropy is not given in terms of elementary functions, in Figure \ref{fig-lognw}, we plot the weighted interval extropy as a function of $t_1$ for fixed $t_2$ (Figure \ref{fig-lognw}(a)) and vice versa (Figure \ref{fig-lognw}(b)).
\begin{figure}
     \centering
     \begin{subfigure}[b]{0.48\textwidth}
         \centering
         \includegraphics[width=\textwidth]{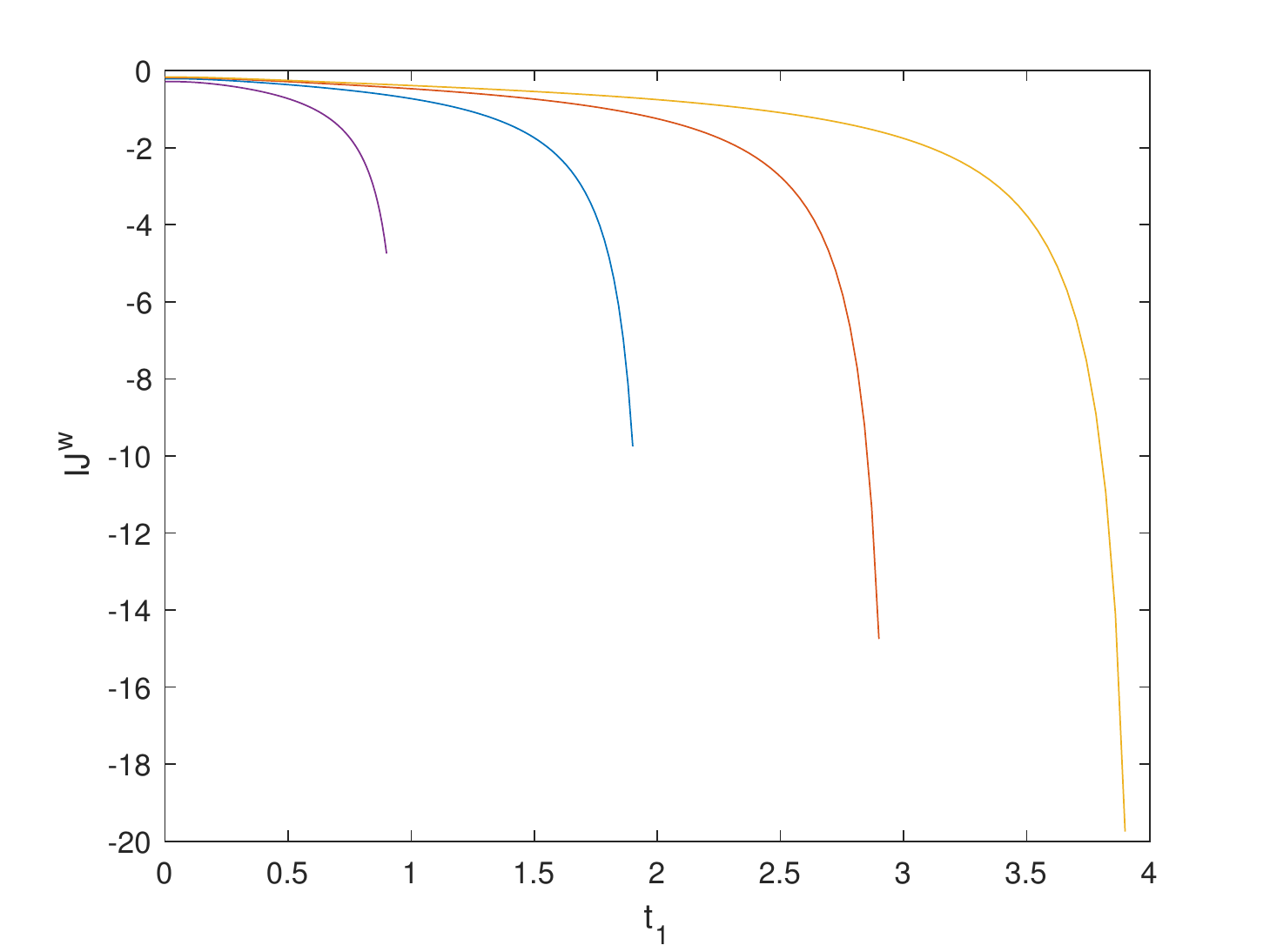}
         \caption{$IJ^w(t_1,t_2)$ as a function of $t_1$}
     \end{subfigure}
     \hfill
     \begin{subfigure}[b]{0.48\textwidth}
         \centering
         \includegraphics[width=\textwidth]{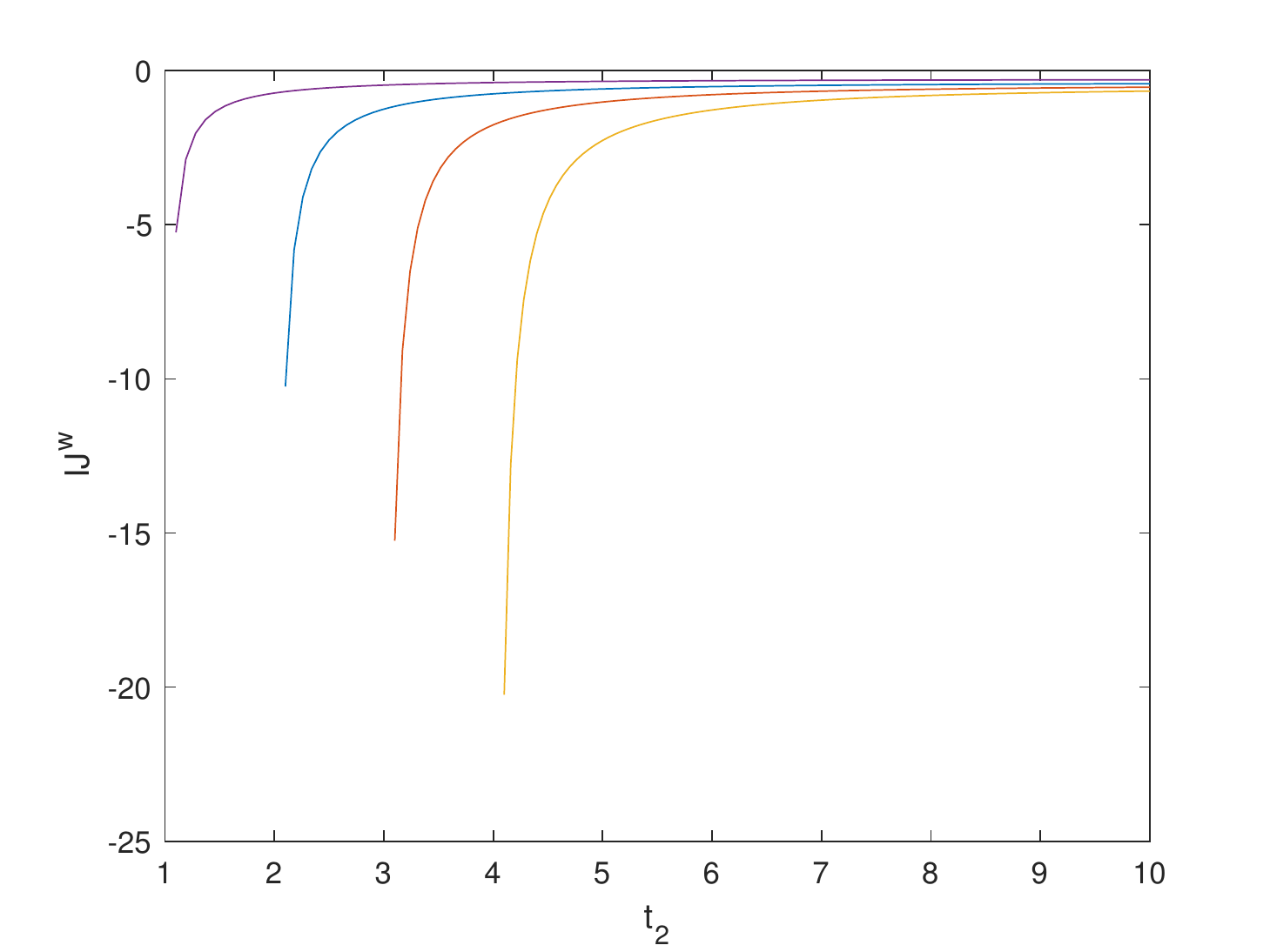}
         \caption{$IJ^w(t_1,t_2)$ as a function of $t_2$}
     \end{subfigure}
     \hfill
        \caption{Plot of $IJ^w$ in Example \ref{ex6} as a function of $t_1$ or $t_2$ fixing the other one with  $t_i=1$ (violet), 2 (blue), 3 (red) and 4 (yellow), $i=1,2$.}
        \label{fig-lognw}
\end{figure}
\end{example}

In the following theorem, we prove that the expression of the weighted extropy is composed of three terms in function of the weighted past, residual and interval extropies.

\begin{theorem}
Let $X$ be a random variable denoting the lifetime of a component. For all $0<t_1<t_2<+\infty$ the weighted extropy can be decomposed as follows:
\begin{eqnarray}
\nonumber
J^w(t_1,t_2)=F^2(t_1) J^w(_{t_1}X)+({F}(t_2)-{F}(t_1))^2 IJ^w(t_1,t_2)+ \overline{F}^2(t_2) J^w(X_t), \label{decomposition of WE}
\end{eqnarray}
i.e., Weighted Extropy is a function of WPEx, WREx and WIEx.
\end{theorem}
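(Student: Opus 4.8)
The plan is to mirror exactly the proof of Theorem~\ref{thm1}, replacing the squared density $f^2(x)$ by the weighted integrand $x\,f^2(x)$ throughout. Starting from the definition of the weighted extropy
$$J^w(X)=-\frac{1}{2}\int_0^{+\infty} x\,f^2(x)\,dx,$$
I would split the range of integration at the two truncation points $t_1$ and $t_2$, writing
$$J^w(X)=-\frac{1}{2}\int_0^{t_1} x\,f^2(x)\,dx-\frac{1}{2}\int_{t_1}^{t_2} x\,f^2(x)\,dx-\frac{1}{2}\int_{t_2}^{+\infty} x\,f^2(x)\,dx.$$

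Next I would identify each of the three pieces with a weighted dynamic extropy. Comparing with the definitions of the weighted past, interval and residual extropies recalled in the Introduction, one reads off
$$-\frac{1}{2}\int_0^{t_1} x\,f^2(x)\,dx=F^2(t_1)\,J^w({}_{t_1}X),$$
since $J^w({}_{t_1}X)=-\frac{1}{2F^2(t_1)}\int_0^{t_1} x\,f^2(x)\,dx$, and likewise
$$-\frac{1}{2}\int_{t_1}^{t_2} x\,f^2(x)\,dx=(F(t_2)-F(t_1))^2\,IJ^w(t_1,t_2),$$
$$-\frac{1}{2}\int_{t_2}^{+\infty} x\,f^2(x)\,dx=\overline F^2(t_2)\,J^w(X_{t_2}).$$
Substituting these three identities into the split integral yields the claimed decomposition.

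The argument carries no real obstacle: it is a pure bookkeeping identity, obtained by multiplying and dividing each truncated integral by the square of the appropriate normalizing probability mass. The only points requiring care are notational, and I would flag them explicitly: the left-hand side should be read as the weighted extropy $J^w(X)$ of the untruncated variable rather than a function of $(t_1,t_2)$, and the last summand should carry the subscript $t_2$, i.e.\ $J^w(X_{t_2})$, to match the truncation at $t_2$. No convergence or differentiability assumptions beyond the finiteness of $J^w(X)$ are needed, since the three sub-integrals are merely the additive pieces of a single integral over $(0,+\infty)$.
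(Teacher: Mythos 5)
Your proof is correct and is exactly the argument the paper intends: the authors omit the proof, stating only that it is analogous to that of Theorem~1, and your splitting of $\int_0^{+\infty} x f^2(x)\,dx$ at $t_1$ and $t_2$ with the identification of each piece as $F^2(t_1)J^w({}_{t_1}X)$, $(F(t_2)-F(t_1))^2 IJ^w(t_1,t_2)$ and $\overline F^2(t_2)J^w(X_{t_2})$ is precisely that analogue. Your notational flags (the left-hand side should be $J^w(X)$, and the last term should read $J^w(X_{t_2})$) correctly identify typos in the statement as printed.
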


\begin{proof}
The proof is similar to the one of Theorem \ref{thm1} and hence it is omitted.
\end{proof}

In the following proposition, we analyze the effect of a linear transformation on the weighted interval extropy.

\begin{proposition}
\label{pr2}
Let $X$ be a non negative and absolutely continuous random variable and let $Y=aX+b$ where $a>0$ and $b\ge0$. The weighted interval extropy of $Y$ is given in terms of the interval extropy and weighted interval extropy of $X$ as
\begin{equation}
\label{linearwei}
IJ_Y^w(t_1,t_2)=IJ_X^w\left(\frac{t_1-b}{a},\frac{t_2-b}{a}\right)+\frac{b}{a}IJ_X\left(\frac{t_1-b}{a},\frac{t_2-b}{a}\right),
\end{equation}
where $t_1,t_2\in S_Y$.
\end{proposition}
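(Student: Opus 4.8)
The plan is to mirror the proof of Proposition~\ref{pr1}, exploiting the relations~(\ref{utu}) between $F_Y,f_Y$ and $F_X,f_X$, but keeping careful track of the extra weight factor $x$ appearing in the definition~(\ref{Weighted Interval extropy}) of the weighted interval extropy. First I would substitute $f_Y(x)=\frac{1}{a}f_X\left(\frac{x-b}{a}\right)$ and $F_Y(x)=F_X\left(\frac{x-b}{a}\right)$ directly into the defining formula for $IJ^w_Y(t_1,t_2)$, obtaining
\[
IJ^w_Y(t_1,t_2)=-\frac{1}{2\left(F_X\left(\frac{t_2-b}{a}\right)-F_X\left(\frac{t_1-b}{a}\right)\right)^2}\int_{t_1}^{t_2} x\,\frac{1}{a^2}f_X^2\left(\frac{x-b}{a}\right)dx.
\]

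Next I would perform the change of variable $u=\frac{x-b}{a}$, so that $x=au+b$ and $dx=a\,du$, while the limits $t_1,t_2$ are mapped to $\frac{t_1-b}{a},\frac{t_2-b}{a}$. After simplifying the constant $\frac{1}{a^2}\cdot a=\frac{1}{a}$, the integrand carries the factor $x=au+b$, and this is the crux of the argument: unlike in Proposition~\ref{pr1}, where the weight is absent and the $\frac{1}{a}$ factors out cleanly, here the weight does not pass through unchanged but splits as $\frac{au+b}{a}=u+\frac{b}{a}$. Splitting the integral along this decomposition produces two pieces.

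The final step is to recognise each piece. The term carrying the factor $u$ reproduces exactly the definition~(\ref{Weighted Interval extropy}) of the weighted interval extropy of $X$ evaluated at $\left(\frac{t_1-b}{a},\frac{t_2-b}{a}\right)$, while the term carrying the constant $\frac{b}{a}$ reproduces, up to that multiplicative constant, the definition~(\ref{Interval extropy}) of the plain interval extropy of $X$ at the same pair of arguments. Collecting the two contributions yields~(\ref{linearwei}).

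I expect no genuine obstacle here, since the manipulation is routine once the substitution is in place. The only point requiring care is the decomposition $x=au+b$ of the weight after the change of variables: it is precisely this splitting that creates the two-term structure of~(\ref{linearwei}) and clarifies why, in contrast with the translation-invariant interval extropy of Proposition~\ref{pr1}, a shift $b>0$ makes the weighted interval extropy depend on location through the additional $\frac{b}{a}IJ_X$ term.
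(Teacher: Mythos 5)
Your proposal is correct and follows essentially the same route as the paper: substitute the relations~(\ref{utu}) into the definition~(\ref{Weighted Interval extropy}), change variables, and split the weight $x=au+b$ so that the integral decomposes into the $IJ_X^w$ term and the $\frac{b}{a}IJ_X$ term. The computation and the identification of the two pieces match the paper's proof exactly.
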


\begin{proof}
By using the expressions of the cdf and the pdf of $Y$ in terms of $F_X$ and $f_X$ obtained in (\ref{utu}), the weighted interval extropy of $Y$ can be expressed as
\begin{eqnarray}
\nonumber
IJ_Y^w(t_1,t_2)&=&-\frac{1}{2\left(F_X\left(\frac{t_2-b}{a}\right)-F_X\left(\frac{t_1-b}{a}\right)\right)^2}\int_{t_1}^{t_2} \frac{x}{a^2}f_X^2\left(\frac{x-b}{a}\right)\,dx \\
\nonumber
&=& -\frac{1}{2\left(F_X\left(\frac{t_2-b}{a}\right)-F_X\left(\frac{t_1-b}{a}\right)\right)^2}\int_{\frac{t_1-b}{a}}^{\frac{t_2-b}{a}} x f_X^2(x)\,dx \\
\nonumber
\nonumber
&& -\frac{1}{2\left(F_X\left(\frac{t_2-b}{a}\right)-F_X\left(\frac{t_1-b}{a}\right)\right)^2}\int_{\frac{t_1-b}{a}}^{\frac{t_2-b}{a}} \frac{b}{a}f_X^2(x)\,dx \\
\nonumber
&=&IJ_X^w\left(\frac{t_1-b}{a},\frac{t_2-b}{a}\right)+\frac{b}{a}IJ_X\left(\frac{t_1-b}{a},\frac{t_2-b}{a}\right),
\end{eqnarray}
which completes the proof.
\end{proof}

\begin{remark}
The results given in Propositions \ref{pr1} and \ref{pr2} about linear transformations could be generalized to monotonic transformations but, in these cases, we do not obtain a formula of interest, in the sense that the interval extropy and the weighted interval extropy of the transformed random variable are not expressed in terms of the ones of the original random variable.
\end{remark}

In the following theorem, we present an upper bound for the weighted interval extropy given in terms of the generalized failure rate function.

\begin{theorem}
For an absolutely continuous non-negative random variable $X$, if the WIEx is increasing in $t_2$, then we have 
\begin{eqnarray}
IJ^w(t_1,t_2)\leq -\frac{t_2h_2(t_1,t_2)}{4}. \label{bound of WIE}
\end{eqnarray}
\end{theorem}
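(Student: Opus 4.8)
The plan is to mimic the proof of Theorem \ref{thmub} for the unweighted bound (\ref{bound of IE2}), the only new ingredient being the extra factor $x$ inside the integral defining the weighted interval extropy. First I would differentiate the weighted interval extropy (\ref{Weighted Interval extropy}) with respect to $t_2$. Writing $\Delta=F(t_2)-F(t_1)$ and viewing $IJ^w=-\frac{1}{2\Delta^2}\int_{t_1}^{t_2}xf^2(x)\,dx$ as a quotient, the quotient rule together with the fundamental theorem of calculus produces a numerator built from $\frac{\partial}{\partial t_2}\int_{t_1}^{t_2}xf^2(x)\,dx=t_2f^2(t_2)$ and $\frac{\partial}{\partial t_2}\Delta^2=2\Delta f(t_2)$.

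The key step is then to re-express the resulting derivative back in terms of $h_2(t_1,t_2)=f(t_2)/\Delta$ and $IJ^w(t_1,t_2)$ itself. Substituting $f(t_2)=h_2\Delta$ and recognizing that $\frac{1}{\Delta^2}\int_{t_1}^{t_2}xf^2(x)\,dx=-2IJ^w(t_1,t_2)$, I expect to obtain the identity
\begin{eqnarray}
\nonumber
\frac{\partial IJ^w(t_1,t_2)}{\partial t_2}=-\frac{t_2h_2^2(t_1,t_2)}{2}-2h_2(t_1,t_2)IJ^w(t_1,t_2),
\end{eqnarray}
which is the exact analogue of (\ref{derivative to t2}), carrying an additional factor $t_2$ in the first term.

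Finally, I would invoke the hypothesis that $IJ^w$ is increasing in $t_2$, so that the left-hand side above is nonnegative. Since $f$ is nonnegative and $\Delta>0$ on $D$, the generalized failure rate $h_2(t_1,t_2)$ is strictly positive; dividing the inequality $-\frac{t_2h_2^2}{2}-2h_2IJ^w\ge 0$ by $2h_2>0$ and rearranging then yields $IJ^w(t_1,t_2)\le-\frac{t_2h_2(t_1,t_2)}{4}$, which is precisely (\ref{bound of WIE}).

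I do not anticipate a genuine obstacle, since the argument is structurally identical to that of Theorem \ref{thmub}. The one point requiring care is the algebraic simplification in the middle step: one must correctly propagate the factor $t_2$ arising from differentiating the weighted integrand and fold the remaining integral back into the term $-2h_2IJ^w$. Getting these coefficients right is exactly what produces the clean self-referential differential relation on which the bound rests.
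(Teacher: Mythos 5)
Your proposal is correct and is precisely the argument the paper intends: it omits the proof as being "in analogy with the one of Theorem \ref{thmub}," and your differential identity $\frac{\partial IJ^w}{\partial t_2}=-\frac{t_2h_2^2}{2}-2h_2IJ^w$ is the right weighted analogue of (\ref{derivative to t2}). The final rearrangement under the monotonicity hypothesis also checks out.
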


\begin{proof}
The proof follows in analogy with the one of Theorem \ref{thmub} and hence it is omitted.
\end{proof}

\section{Conclusion}
In this paper dynamic versions of extropy for double truncated random variables have been presented. Several examples are given. The behavior under linear transformations of these new measures has been studied. Some bounds for them have been found in relation with the Generalized Failure Rate.

\bmhead{Acknowledgments}
Francesco Buono and Maria Longobardi are members of the research group GNAMPA of INdAM (Istituto Nazionale di Alta Matematica), are partially supported by MIUR - PRIN 2017, project ‘‘Stochastic Models for Complex Systems’’, no. 2017 JFFHSH. The present work was developed within the activities of the project 000009\_ALTRI\_CDA\_75\_2021\_FRA\_LINEA\_B\_SIMONELLI funded by ``Programma per il finanziamento della ricerca di Ateneo - Linea B" of the University of Naples Federico II.

\section*{Conflict of interest}
The authors declare that they have no conflict of interest.


\begin{thebibliography}{}
%
%
\bibitem{barlow}
Barlow, R.~E., and F.J. Proschan (1996). Mathematical Theory of Reliability. Philadelphia: Society for Industrial and Applied Mathematics. 

\bibitem{Balakrishnan} Balakrishnan, N., Buono, F.,  Longobardi, M. (2020). On weighted extropies. Communications in Statistics-Theory and Methods, DOI:
10.1080/03610926.2020.1860222.

\bibitem{balakrishnan2} Balakrishnan, N., Buono, F.,  Longobardi, M. (2022). On Tsallis extropy with an application to pattern recognition. Statistics and Probability Letters, https://doi.org/10.1016/j.spl.2021.109241.

\bibitem{becerra}
Becerra, A., de la Rosa, J.I., González, E. et al. Training deep neural networks with non-uniform frame-level cost function for automatic speech recognition. Multimed Tools Appl 77, 27231–27267 (2018). https://doi.org/10.1007/s11042-018-5917-5.

\bibitem{Betensky}
Betensky, R. A. and Martin, E. C. (2003). Commentary: Failure-rate functions for doubly truncated random variables, IEEE Transactions on Reliability, 52(1), 7--8.

\bibitem{Di Crescenzo} Di Crescenzo, A., Longobardi, M. (2002). Entropy-based measure of uncertainty in past lifetime distributions. J. Appl. Probab. 39: 434--440.

\bibitem{Di Crescenzo2006} Di Crescenzo, A., Longobardi, M. (2006). On weighted residual and past entropies. Scientiae Mathematicae Japonicae, 64 (2), 255--266.

\bibitem{Ebrahimi} Ebrahimi, N (1996). How to measure uncertainty about residual lifetime. Sankhya A, 58, 48--57.

\bibitem{Kamari} Kamari, O.,  Buono, F. (2020). On extropy of past lifetime distribution. Ricerche di Matematica, DOI:10.1007/s11587-020-00488-7.

\bibitem{kazemi}
Kazemi, M.R., Tahmasebi, S., Buono, F., Longobardi, M. Fractional Deng Entropy and Extropy and Some Applications. Entropy 2021, 23, 623.
https://doi.org/10.3390/e23050623.

\bibitem{Khorashadizadeh}
Khorashadizadeh, M.; Rezaei Roknabadi, A. H. and Mohtashami Borzadaran, G. R. (2012). Characterizations of lifetime distributions based on doubly truncated mean residual life and mean past to failure, Communications in Statistics - Theory and Methods, 41(6), 1105--1115.


\bibitem{Krishnan} Krishnan, A. S., Sunoj, S. M.,  Nair, N. U. (2020). Some reliability properties of extropy for residual and past lifetime random variables. Journal of the Korean Statistical Society, 49(2), 457--474.

\bibitem{Lad} Lad, F., Sanfilippo, G.,  Agrò, G. (2015). Extropy: complementary dual of entropy. Statistical Science, 30(1), 40--58.

\bibitem{martinas}
Martinas, K., Frankowicz, M. (2000). Extropy-reformulation of the entropy principle. Periodica Polytechnica Chemical Engineering, 44(1): 29--38.

\bibitem{Misagh 2011} Misagh, F., Yari, G. H. (2011). On weighted interval entropy. Statistics and Probability Letters, 81(2), 188--194.

\bibitem{Misagh 2012} Misagh, F., Yari, G. (2012). Interval entropy and informative distance. Entropy, 14(3), 480-490.

\bibitem{Navarro} Navarro, J., Ruiz, J.M. (1996). Failure-rate function for doubly-truncated random variables. IEEE Transactions on Reliability, 4 , 685--690.

\bibitem{Poursaeed}
Poursaeed, M. H. and Nematollahi, A. R. (2008). On the mean past and the mean residual life under double monitoring, Communications in Statistics - Theory and Methods, 37(7), 1119--1133.

\bibitem{Qiu} Qiu, G., Jia, K. (2018). The residual extropy of order statistics. Stat. Probab. Lett. 133, 15--22.

\bibitem{Shannon} C. E. Shannon, (1948). A mathematical theory of communication. Bell System Technical J , 27, 379--423.

\bibitem{Sunoj} S. M. Sunoj, P. G. Sankaran, S. S. Maya (2009) Characterizations of Life Distributions Using Conditional Expectations of Doubly (Interval) Truncated Random Variables, Communications in Statistics - Theory and Methods, 38:9, 1441--1452, DOI: 10.1080/03610920802455001.

\end{thebibliography}
\end{document}